\documentclass[12pt,reqno]{amsart} 
\setlength{\textheight}{23cm}
\setlength{\textwidth}{16cm}
\setlength{\topmargin}{-0.8cm}
\setlength{\parskip}{0.3\baselineskip}
\hoffset=-1.4cm

\usepackage{amsmath,amsthm}
\usepackage{xypic}
\usepackage{enumerate}
\usepackage{lscape}

\usepackage{amscd}
\usepackage{amsfonts,amssymb,latexsym} 

\usepackage[all]{xy}
\xyoption{arc}
\CompileMatrices

\newcommand{\SC}{{\mathcal{C}}}
\newcommand{\SD}{{\mathcal{D}}}
\newcommand{\SE}{{\mathcal{E}}}
\newcommand{\SF}{{\mathcal{F}}}
\newcommand{\SM}{{\mathcal{M}}}
\newcommand{\SN}{{\mathcal{N}}}
\newcommand{\SO}{{\mathcal{O}}}
\newcommand{\SP}{{\mathcal{P}}}

\newcommand{\ST}{{\mathcal{T}}}
\newcommand{\SU}{{\mathcal{U}}}

\newcommand{\SZ}{{\mathcal{Z}}}
\newcommand{\TT}{\mathbb{T}}
\newcommand{\PP}{\mathbb{P}}
\newcommand{\CC}{\mathbb{C}}
\newcommand{\RR}{\mathbb{R}}
\newcommand{\ZZ}{\mathbb{Z}}
\newcommand{\Hom}{\operatorname{Hom}}
\newcommand{\End}{\operatorname{End}}
\newcommand{\Pic}{\operatorname{Pic}}
\newcommand{\id}{\operatorname{Id}}
\newcommand{\Aut}{\operatorname{Aut}}
\newcommand{\rk}{{\rm rank}}
\newcommand{\tr}{\operatorname{tr}}
\newcommand{\Mat}{\operatorname{Mat}}
\newcommand{\wt}{\widetilde}
\newcommand{\GL}{\operatorname{GL}}
\newcommand{\PGL}{\operatorname{PGL}}
\newcommand{\SSL}{\operatorname{SL}}
\newcommand{\op}{\operatorname}

\newcommand{\gitq}{/\!\!/}

\newtheorem{proposition}{Proposition}[section]
\newtheorem{theorem}[proposition]{Theorem}
\newtheorem{lemma}[proposition]{Lemma}
\newtheorem{corollary}[proposition]{Corollary}
\newtheorem{remark}[proposition]{Remark}

\numberwithin{equation}{section}

\title[Automorphisms of moduli spaces of framed bundles]{Automorphism group of
a moduli space of framed bundles over a curve}

\author[D. Alfaya]{David Alfaya}

\address{Instituto de Ciencias Matem\'aticas (CSIC-UAM-UC3M-UCM),
Nicol\'as Cabrera 15, Campus Cantoblanco UAM, 28049 Madrid, Spain}

\email{david.alfaya@icmat.es}

\author[I. Biswas]{Indranil Biswas}

\address{School of Mathematics, Tata Institute of Fundamental Research,
Homi Bhabha Road, Mumbai 400005, India}

\email{indranil@math.tifr.res.in}

\date{}

\keywords{Framed bundle, moduli space, automorphism group, Higgs bundle.}

\subjclass[2010]{14D20, 14C34}

\begin{document}

\begin{abstract}
Let $X$ be a smooth complex projective curve, and let $x\,\in\, X$ be a 
point. We compute the automorphism group of the moduli space of framed vector bundles 
on $X$ of rank $r\, \geq\, 2$ with a framing over $x$. It is shown that this
automorphism group is generated by the following three: (1) pullbacks
using automorphisms of the curve $X$ that fix the marked point $x$, (2) tensorization with 
certain line bundles over $X$ and (3) the action of $\PGL_r(\CC)$ through composition with 
the framing.
\end{abstract}

\maketitle

\section{Introduction}

Framed bundles (also called vector bundles with a level structure) are pairs 
$(E,\,\alpha)$ consisting of a vector bundle $E$ of rank $r\, \geq\, 2$ and a nonzero linear 
map $\alpha:E_x\longrightarrow \CC^r$ from a fiber over a fixed point $x\in X$ to 
$\CC^r$; this $\alpha$ is called a framing. Framed bundles were first introduced by 
Donaldson as a tool to study the moduli space of instantons on $\RR^4$ \cite{Don84}. 
Latter on, Huybrechts and Lehn \cite{HL95modules, HL95pairs} defined framed modules 
as a common generalization of several notions of decorated sheaves, including framed 
bundles and Bradlow pairs. They described a general stability condition for framed 
modules and provided a geometric invariant theoretic construction for the moduli space
of framed modules.

A moduli space of framed bundles of rank $r$ is equipped with a canonical $\PGL_r(\CC)$-action; the 
action of any $[G]\,\in \,\PGL_r(\CC)$ sends a framed bundle $(E,\,\alpha)$ to $$[G]\cdot 
(E,\,\alpha)=(E,\,G\circ \alpha)\, .$$ In \cite{BGM10}, a Torelli type theorem was proved for 
the moduli space of framed bundles for small values of the stability parameter $\tau$ by 
studying this $\PGL_r(\CC)$-action. It was proved there that this action is essentially the 
only nontrivial $\PGL_r(\CC)$-action on the moduli space of framed bundles; the corresponding GIT-quotient was 
shown to be isomorphic to the moduli space of vector bundles.

We extend this result and prove that we can replace the restriction on $\tau$ in \cite{BGM10} (expressed in terms of
the rank) by a much weaker restriction which is independent of the rank.
We prove the following(see Theorem \ref{theorem:Torelli}):

\begin{theorem}\label{theorem:TorelliIntro}
Let $X$ and $X'$ be smooth complex projective curves of genus $g$ and $g'$ respectively, and let
$\tau$ and $\tau'$ be 
positive generic stability parameters such that $g\ge \max\{2+\tau,4\}$ and $g'\,\ge\, \max\{2+\tau',4\}$. Let $x\,\in\, X$ 
and $x'\,\in\, X'$ be marked points; let $\xi$ and $\xi'$ be line bundles over $X$ and $X'$ 
respectively. Let $\SF\,=\,\SF(X,x,r,\xi,\tau)$ be the moduli space of $\tau$-stable framed bundles 
over $(X,x)$ with rank $r\, \geq\, 2$ and determinant $\xi$. Similarly, set
$\SF'\,=\,\SF(X',x',r',\xi',\tau')$ to be the moduli space of $\tau'$-stable framed bundles
over $(X',\,x')$ with rank $r'\, \geq\, 2$ and determinant $\xi'$, and assume that there is an isomorphism 
$\Psi:\SF\stackrel{\sim}{\longrightarrow} \SF'$. Then $r=r'$ and there exists an isomorphism 
$\sigma:X\stackrel{\sim}{\longrightarrow} X'$ such that $\sigma(x)=x'$.
\end{theorem}

The version of this Torelli theorem for small parameters proven in \cite{BGM10} relied on the 
description of the automorphism group of the moduli space of vector bundles
obtained by Kouvidakis and Pantev
in \cite{KP95}. In order to deal with arbitrary stability parameters, we shall use a
generalization of this result that classifies the $k$-birational maps between the moduli spaces of 
vector bundles, i.e., isomorphisms between open subsets whose respective complements have 
codimension at least $k$. A similar $k$-birational classification was developed in the first 
author's Thesis for moduli spaces of parabolic vector 
bundles \cite{Alf18,AG19}. In Section \ref{section:k-bir} we incorporate some ideas from \cite{AG19} into the computation of the automorphism group of the moduli space of vector bundles done in \cite{BGM13} to obtain
the following result (see Theorem \ref{theorem:autoVB}):

\begin{theorem}\label{thorem:k-birIntro}
Let $X$ and $X'$ be smooth complex projective curves of genus $g\,\ge\, 4$ and $g'\,\ge\, 4$ respectively,
and let $\xi$ and $\xi'$ be line bundles over $X$ and $X'$ respectively. Let $\SM(X,r,\xi)$ denote
the moduli space of rank $r\, \geq\, 2$ semistable vector bundles over $X$ with determinant $\xi$.
Similarly, set $\SM(X',r',\xi')$ to be
the moduli space of rank $r'\, \geq\, 2$ semistable vector bundles over $X'$ with determinant $\xi'$. Let
$\SU\,\subset\, \SM(X,r,\xi)$ and $\SU'\subset \SM(X',r',\xi')$ be open subsets such that
\begin{eqnarray*}
\op{codim}(\SM(X,r,\xi)\backslash \SU,\SM(X,r,\xi))\ge 2\\
\op{codim}(\SM(X',r',\xi')\backslash \SU',\SM(X',r',\xi'))\ge 2
\end{eqnarray*}
If $\Phi\,:\, \SU\,\stackrel{\sim}{\longrightarrow}\, \SU'$ is an isomorphism then
\begin{enumerate}
\item $r\,=\,r'$,
\item there is an isomorphism $\sigma\,:\,X'\,\stackrel{\sim}{\longrightarrow}\, X$, and
\item there is a line bundle $L$ over $X$ such that either for every $E\,\in\, \SU$
$$\Phi(E) \,\cong \,\sigma^*(E\otimes L)$$
or for every $E\,\in\, \SU$
$$\Phi(E)\,\cong\, \sigma^*(E\otimes L)^\vee\, .$$
\end{enumerate}
\end{theorem}

Using Theorem \ref{thorem:k-birIntro}, we can approach the main aim of this paper. In Section \ref{section:auto} we compute the automorphism group of the moduli space of framed 
bundles with fixed determinant; more precisely, the following is proved (see Theorem 
\ref{thm:mainthm}):

\begin{theorem}\label{thm:thmIntro}
Let $X$ and $X'$ be smooth complex projective curves of genus $g\,\ge\, \max\{2+\tau,4\}$ and $g'\,\ge\, \max\{2+\tau',4\}$
with base points $x$ and $x'$. Assume that $\tau$ and $\tau'$ are generic stability
parameters and that there exists an isomorphism $\Psi$ between the moduli space of
$\tau$-semistable framed bundles on $X$ of rank $r\, \geq\, 2$ with fixed determinant $\xi$ and
framing over $x$ and the moduli space of $\tau'$-semistable framed bundles on $X'$
of rank $r'\, \geq\, 2$ with fixed determinant $\xi'$ and framing over $x'$. Then $r\,=\,r'$ and there
exists an isomorphism $\sigma\,:\,X'\,\longrightarrow\, X$ such that $\sigma(x')\,=\,x$, and the
isomorphism $\Psi$ is a combination of the following three types of transformations:
\begin{itemize}
\item pullback with respect to the isomorphism $\sigma:X'\longrightarrow X$
\item tensorization with a line bundle $L\in \Pic(X)$, and
\item action of $\PGL_r(\CC)$ defined by $[G]\cdot (E,\,\alpha)\,=\,(E,\,G\circ \alpha)$,
\end{itemize}
where $\sigma$ and $L$ satisfy the relation $\sigma^*(\xi \otimes L^{\otimes r}) \cong \xi'$. Moreover, $\tau$ and $\tau'$ belong to the same stability chamber, i.e., a framed bundle is $\tau$-stable if and only if it is $\tau'$-stable.
\end{theorem}

In particular, Theorem \ref{thm:thmIntro}
allows us to compute explicitly the structure of the automorphism 
group of a moduli space of framed bundles $\SF$ (see Corollary \ref{cor:maincor}):

\begin{corollary}
\label{cor:corIntro}
Let $\tau$ be a generic stability parameter and let $X$ be a curve of genus $g\ge \max\{2+\tau,4\}$ with a marked point $x\in X$. The automorphism group of $\SF=\SF(X,x,r,\xi,\tau)$ is
$$\Aut(\SF)\cong \PGL_r(\CC)\times \ST $$
for a group $\ST$ fitting in the short exact sequence
$$1\longrightarrow J(X)[r] \longrightarrow \ST \longrightarrow \Aut(X,x)
\longrightarrow 1\, ,$$
where $J(X)[r]$ is the $r$-torsion part of the Jacobian $J(X)$ of $X$ and
$$\Aut(X,x)\,=\,\{\sigma\,\in\, \Aut(X) \,\mid\, \sigma(x)\,=\,x\}\, .$$
\end{corollary}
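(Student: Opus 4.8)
The plan is to derive Corollary \ref{cor:corIntro} from Theorem \ref{thm:thmIntro} by carefully analyzing the group of transformations described there. First I would fix notation: let $\SF\,=\,\SF_X(r,\xi,\tau)$ denote the moduli space, let $\mathcal{A}$ be the subgroup of $\Aut(\SF)$ generated by the pullbacks $\sigma^*$ and the tensorizations $T_L$ (with $L$ a line bundle), and let $\mathcal{G}$ be the image of the $\PGL_r(\CC)$-action by composition with the framing. By Theorem \ref{thm:thmIntro}, $\Aut(\SF)$ is generated by $\mathcal{A}$ and $\mathcal{G}$. The first step is to check that $\mathcal{G}$ is normal in $\Aut(\SF)$ and that $\mathcal{A}\cap\mathcal{G}\,=\,\{1\}$: the action of $\PGL_r(\CC)$ only modifies the framing $\alpha\mapsto G\circ\alpha$ and leaves the underlying bundle untouched, so it commutes with tensorization and transforms compatibly under pullback by $\sigma$ (indeed $\sigma^*$ does nothing to the target $\CC^r$ of the framing), which gives normality; and a transformation in $\mathcal{A}$ that genuinely changes the isomorphism class of the underlying bundle (as pullback and nontrivial tensorization do) cannot coincide with one that only alters framings, which after the identification of trivial cases yields the trivial intersection. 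Hence $\Aut(\SF)\,\cong\,\mathcal{G}\rtimes\mathcal{A}$; and since the two actions commute, this is in fact a direct product $\PGL_r(\CC)\times\ST$ with $\ST\,:=\,\mathcal{A}$.

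The second step is to identify $\ST$. There is an obvious surjection $\ST\to\Aut(X,x)$ sending a composition of $\sigma^*$'s and $T_L$'s to the underlying automorphism of $X$ (tensorizations map to the identity). The kernel consists of those elements that act on $\SF$ purely by tensorization, i.e. by $T_L$ with $L\in\Pic(X)$ such that $T_L$ is a well-defined automorphism of $\SF$: this requires $L^{\otimes r}\otimes\xi\,\cong\,\xi$, equivalently $L^{\otimes r}\,\cong\,\SO_X$, so $L\in J(X)[r]$, and conversely $L\,\cong\,\SO_X$ gives the identity on $\SF$ precisely when... actually one must be slightly careful: $T_L$ acts trivially on $\SF$ only if $L\,\cong\,\SO_X$, because tensoring by a nontrivial line bundle changes the isomorphism class of a generic stable bundle. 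This identifies the kernel with $J(X)[r]$ and produces the short exact sequence
\begin{equation*}
1\longrightarrow J(X)[r]\longrightarrow\ST\longrightarrow\Aut(X,x)\longrightarrow 1.
\end{equation*}

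The third step, and the one I expect to require the most care, is verifying that $\ST$ is genuinely a subgroup of $\Aut(\SF)$ of the asserted size — that is, that distinct pairs $(\sigma,L)$ subject to $\sigma^*\xi\otimes L^{\otimes r}\,\cong\,\xi$ really do give distinct automorphisms, and that the composition law matches. Composing $T_L\circ\sigma^*$ with $T_{L'}\circ(\sigma')^*$ gives $T_{L\otimes\sigma^*L'}\circ(\sigma'\sigma)^*$ (up to the order convention for $*$), so the relevant group is an extension of $\Aut(X,x)$ by $\Pic(X)$ modulo the relation that only the $r$-torsion matters for the action and $\SO_X$ acts trivially; one must rule out coincidences like $T_L\circ\sigma^*\,=\,\id$ with $\sigma\neq\id$, which follows because such an identity would force $\sigma$ to preserve the isomorphism class of every $\tau$-semistable framed bundle with determinant $\xi$, and a standard argument (a generic bundle has trivial automorphisms and its pullback by a nontrivial $\sigma$ is not isomorphic to it) excludes this. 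Finally, $J(X)[r]\,\cong\,(\mathbb{Z}/r\mathbb{Z})^{2g}$ is abelian and central in $\ST$ (tensorizations commute and are permuted trivially, being torsion, under pullback up to the same torsion group), completing the description of $\ST$ and hence of $\Aut(\SF)$.
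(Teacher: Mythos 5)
Your proposal follows essentially the same route as the paper: split off the $\PGL_r(\CC)$ factor using that its action is faithful and commutes with the maps $\overline{\ST_{\sigma,L,+}}$, then exhibit $\ST$ as an extension of $\Aut(X,x)$ by $J(X)[r]$ via the composition law $\overline{\ST_{\sigma,L,+}}\circ \overline{\ST_{\id,L',+}}=\overline{\ST_{\id,\sigma^*L',+}}\circ \overline{\ST_{\sigma,L,+}}$. The only substantive deviation is your injectivity step (that $T_L\circ\sigma^*\neq \id$ for $(\sigma,L)\neq(\id,\SO_X)$): the paper gets this by observing that $\overline{\ST_{\sigma,L,+}}$ covers the automorphism $\ST_{\sigma,L,+}$ of $\SM$, which is already known to be nontrivial, whereas you argue directly with a generic stable bundle having trivial automorphisms; both work.

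One genuine error, though it does not affect the stated corollary: $J(X)[r]$ is normal but \emph{not} central in $\ST$ in general. Your own composition law shows that conjugating $T_{L'}$ by $\sigma^*$ yields $T_{\sigma^* L'}$, and $\sigma^*$ can act nontrivially on $J(X)[r]$ --- for instance the hyperelliptic involution acts as $-1$ on $J(X)$, hence nontrivially on $J(X)[r]$ for $r>2$; being $r$-torsion does not make a line bundle $\sigma$-invariant. Since the corollary asserts only a short exact sequence, normality is all that is needed, so your argument stands once that closing remark about centrality is deleted.
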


\section{$k$-birational automorphisms of the moduli space of vector bundles}
\label{section:k-bir}

Let $M$ and $M'$ be two algebraic varieties, and let $k$ be a positive integer. A $k$-birational map is an
isomorphism $\varphi\,:\,U\,\longrightarrow\, U'$ between two open subsets $U\subset M$ and $U'\subset M'$ such that
$$\op{codim}(M\backslash U)\,\ge\, k$$
$$\op{codim}(M'\backslash U')\ge k\, .$$
We say that $M$ and $M'$ are $k$-birational if there exists a $k$-birational map between them. 
Observe that for $k=1$, a $1$-birational map is the same as a birational map, but there are 
some $k$-birational invariants which are not birational. For example, if $M$ and $M'$ are 
normal $2$-birational varieties, Hartogs' theorem proves that $H^0(M,\, {\mathcal O}_M)\,\cong\,
H^0(M',\, {\mathcal O}_{M'})$, but 
this would not be necessarily true if $M$ and $M'$ were just birational.

Let $X$ be a smooth complex projective curve. A vector bundle $E$ over $X$ is called stable 
(respectively, semistable) if for all proper subbundles $0\,\subsetneq\, E'\,\subsetneq\, E$ 
$$\frac{\text{degree}(E')}{\rk(E')} < \frac{\text{degree}(E)}{\rk(E)} \ \ 
(\text{respectively, }\frac{\text{degree}(E')}{\rk(E')} \leq \frac{\text{degree}(E)}{\rk(E)})$$
Let $\SM\,=\,\SM(X,r,\xi)$ denote the moduli space of semistable 
vector bundles over $X$ of rank $r\, \geq\, 2$ and determinant $\xi$, and let $\SM^s\, =\,\SM^s(X,r,\xi)$ be 
the Zariski open subset of it corresponding to stable vector bundles. See \cite[Theorem 2(B)]{NS65} or \cite[p.~635,
Theorem~2.8(B)]{Ma} for openness of the stability condition.

Let $L$ be a line bundle over $X$, and let $\sigma\,:\,X'\,\stackrel{\sim}{\longrightarrow} \,X$ be any
isomorphism between curves; take any $s\,\in\, \{1,\,-1\}$. We define the map
$$\ST_{\sigma,L,s}\,:\,\SM(X,r,\xi) \,\longrightarrow \,\SM(X',r,\sigma^*(\xi \otimes L^{\otimes r})^s)$$ as
\begin{eqnarray*}
\xymatrixrowsep{0.05pc}
\xymatrixcolsep{0.3pc}
\xymatrix{
\ST_{\sigma,L,+}&:&\SM(X,r,\xi) \ar[rrrr] &&&& \SM(X',r,\sigma^*(\xi\otimes L^{\otimes r}))\\
&& E \ar@{|->}[rrrr] &&&& \sigma^*(E\otimes L)
}
\end{eqnarray*}
for $s=1$, and
\begin{eqnarray*}
\xymatrixrowsep{0.05pc}
\xymatrixcolsep{0.3pc}
\xymatrix{
\ST_{\sigma,L,-}&:&\SM(X,r,\xi) \ar[rrrr] &&&& \SM(X',r,\sigma^*(\xi\otimes L^{\otimes r})^{-1})\\
&& E \ar@{|->}[rrrr] &&&& \sigma^*(E\otimes L)^\vee
} 
\end{eqnarray*}
for $s=-1$. In particular, if $\sigma:X\stackrel{\sim}{\longrightarrow} X$ is an
automorphism of a curve, and $\sigma^*(\xi \otimes L^{\otimes r})^s\cong \xi$, then the above defined
map 
$\ST_{\sigma,L,s}:\SM\longrightarrow \SM$ is an automorphism of the moduli space of
vector bundles such that $\ST_{\sigma,L,s}(\SM^s)=\SM^s$. Observe, however, that for $r=2$,
some of the previous transformations $\ST_{\sigma,L,s}$ are in fact redundant.

\begin{lemma}
\label{lemma:trivialrk2}
Let $r\,=\,2$. Then for every isomorphism
$$\ST_{\sigma,L,-}\,:\,\SM(X,r,\xi)\,\longrightarrow\, \SM(X',r,\sigma^*(\xi\otimes L)^{-1})$$
there exists a line bundle $L'$ on $X$ such that
$$\ST_{\sigma,L,-}\,=\,\ST_{\sigma,L',+}\, .$$
\end{lemma}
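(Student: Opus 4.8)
The plan is to exploit the special structure of the rank $2$ case, namely that $E^\vee \cong E \otimes (\det E)^{-1}$ for any rank $2$ bundle $E$. Since the moduli space $\SM$ consists of bundles with fixed determinant $\xi$, for every $E$ representing a point of $\SM$ we have a canonical isomorphism $E^\vee \cong E \otimes \xi^{-1}$. The strategy is to use this to rewrite the dualization transformation $\ST_{\sigma,L,-}$ as a composition of a tensorization and a pullback, i.e. as a transformation of type $\ST_{\sigma,L',+}$ for a suitable line bundle $L'$.

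First I would observe that, for rank $2$, the map $\ST_{\sigma,L,-}$ sends $E \mapsto \sigma^* E^\vee \otimes L \cong \sigma^*(E \otimes \xi^{-1}) \otimes L \cong \sigma^* E \otimes \sigma^*\xi^{-1} \otimes L$. Setting $L' := \sigma^* \xi^{-1} \otimes L$, this is exactly the formula for $\ST_{\sigma,L',+}$ applied to $E$. So at the level of the underlying set-map on bundles the two transformations visibly agree. Next I would check compatibility with the determinant constraint: if $\ST_{\sigma,L,-}$ is an automorphism of $\SM(X,r,\xi)$, then by definition $\sigma^*\xi^{-1} \otimes L^{\otimes 2} \cong \xi$, and one computes $\sigma^*\xi \otimes (L')^{\otimes 2} = \sigma^*\xi \otimes \sigma^*\xi^{-2} \otimes L^{\otimes 2} = \sigma^*\xi^{-1} \otimes L^{\otimes 2} \cong \xi$, so $\ST_{\sigma,L',+}$ is indeed an automorphism of $\SM(X,2,\xi)$. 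Finally, since both $\ST_{\sigma,L,-}$ and $\ST_{\sigma,L',+}$ are morphisms of varieties and they agree on closed points (the isomorphism class of $\sigma^* E \otimes L'$ is the same object described two ways), they agree as morphisms of $\SM$.

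I do not expect a serious obstacle here; the rank $2$ identity $E^\vee \cong E \otimes (\det E)^{-1}$ does all the work and the verification is essentially bookkeeping with line bundles. The one point requiring a little care is the naturality of the isomorphism $E^\vee \cong E \otimes \xi^{-1}$ in families — one wants $\ST_{\sigma,L,-}$ and $\ST_{\sigma,L',+}$ to coincide as morphisms, not merely pointwise — but this follows because the isomorphism $\wedge^2 E \otimes E^\vee \cong E$ for rank $2$ bundles is canonical and hence works in families, and $\det$ is constantly $\xi$ on the relevant family. Thus the two families of framed bundles (or bundles) produced by the two constructions are isomorphic, giving the equality of the induced automorphisms of $\SM$.
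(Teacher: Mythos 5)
Your proposal is correct and follows exactly the paper's argument: use the rank-$2$ identity $E^\vee \cong E \otimes \xi^{-1}$ to rewrite $\sigma^*E^\vee \otimes L$ as $\sigma^*E \otimes L'$ with $L' = \sigma^*\xi^{-1} \otimes L$. The extra checks you include (the determinant constraint and naturality in families) are harmless refinements of the same proof.
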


\begin{proof}
Since $\bigwedge^2E\,\cong\, \xi$ for every $E\,\in\, \SM(x,r,\xi)$, there is an isomorphism
$$E^\vee \,\cong\, E\otimes \xi^{-1}\, .$$
Consequently, for every $\sigma$ and $L$ we have
$$\sigma^*(E\otimes L)^\vee\,=\,\sigma^*(E^\vee \otimes L^{-1})\,\cong\, \sigma^*(E\otimes \xi^{-1} \otimes L^{-1})\, .$$
Then taking $L'=\xi^{-1}\otimes L^{-1}$ yields
$$\ST_{\sigma,L,-}\,=\,\ST_{\sigma,L',+}\, ,$$
thus proving the lemma.
\end{proof}

By \cite{KP95} and \cite{BGM13}, every automorphism of $\SM$ is given by a transformation of type 
$\ST_{\sigma,L,s}$ for suitable combinations of $\sigma$, $L$ and $s$. The objective of this 
section is to extend this result and prove that every $2$-birational map between moduli spaces of 
vector bundles must be of the form $\ST_{\sigma,L,s}$. This type of $2$-birational analogue has 
been obtained in \cite{AG19} for the moduli space of parabolic vector bundles and we can adapt the 
proof to the non-parabolic setup. The strategy will be similar to \cite{BGM13}, but we need to 
modify some technical steps following \cite{AG19} for the reason that we are now working with maps 
between different moduli spaces instead of automorphisms of a single moduli space.

First of all, observe that, contrary to the parabolic case, every $2$-birational map between moduli 
spaces of vector bundles actually extends to an isomorphism.

\begin{proposition}
\label{prop:extend2bir}
Let $\SU\,\subset\, \SM\,=\,\SM(X,r,\xi)$ and $\SU'\,\subset\, \SM'\,=\,\SM(X',r',\xi')$ be open subsets whose
respective complements have codimension at least $2$, and let $\Phi\,:\,\SU
\,\longrightarrow\,\SU'$ be an isomorphism. Then $\Phi$ extends uniquely to an isomorphism
$\overline{\Phi}\,:\,\SM(X,r,\xi)\,\longrightarrow\, \SM(X',r',\xi')$
\end{proposition}

\begin{proof}
Since $\SM$ is a normal variety, so is $\SU$. Therefore, as the codimension of $\SM\backslash \SU$
is at least $2$, every line bundle $L$ on $\SU$ extends uniquely to a line bundle on $\SM$, and
we have $\Pic(\SM)\,=\,\Pic(\SU)$. Similarly, $\Pic(\SM')\,=\,\Pic(\SU')$ and, therefore, $\Phi^*$
induces an isomorphism
$$\Pic(\SM)\,=\,\Pic(\SU)\,\cong\, \Pic(\SU')\,=\,\Pic(\SM')\, .$$
Take a sufficiently very ample line bundle $L'$ on $\SM'$, and consider the embedding
$\SM'\,\hookrightarrow\, \PP(H^0(\SM',\,L')^\vee)$. As $\Pic(\SU)\,=\,\Pic(\SM)\cong \ZZ$, and
$$\Phi^*\,:\,\Pic(\SU)\longrightarrow \Pic(\SU')$$ is an isomorphism, it follows that $\Phi^* L'|_{\SU'}$ extends
uniquely to a very ample line bundle $L$ on $\SM$, and we have an embedding $\SM\,\hookrightarrow\,
\PP(H^0(\SM,\,L)^\vee)$. Since the codimension of the complement of $\SU$ in $\SM$ is at least $2$, and
$\SM$ is normal, we have $H^0(\SM,\,L)\,=\,H^0(\SU,\,L)$. Similarly, we have
$$H^0(\SM',\,L')\,=\,H^0(\SU',\,L')\, .$$
Therefore, $\Phi$ induces an isomorphism
$$\PP(H^0(\SM,\,L)^\vee)\,=\,\PP(H^0(\SU,\,L)^\vee) \,\cong\, \PP(H^0(\SU',\,L')^\vee)\,=\,\PP(H^0(\SM',\,L')^\vee)$$
which sends $\SU$ to $\SU'$. As the closure of $\SU$ in $\PP(H^0(\SM,\,L)^\vee)$ is $\SM$ and
the closure of $\SU'$ in $\PP(H^0(\SM',\,L')^\vee)$ is $\SM'$, $\Phi$ extends uniquely to an
isomorphism between $\SM$ and $\SM'$.
\end{proof}

Therefore, we only need to classify isomorphisms between different moduli spaces of vector 
bundles. In order to do that, we can make use of the Higgs bundles.

A Higgs bundle is a pair $(E,\,\varphi)$ consisting of a vector bundle $E$ on $X$ and a
holomorphic section $\varphi\,\in\, H^0(X,\, \End_0(E)\otimes K_X)$ called
a Higgs field. A Higgs bundle $(E,\,\varphi)$ is stable (respectively, semistable) if for all proper subbundles $0\, \subsetneq \, E' \, \subsetneq \, E$ such that $\varphi(E')\,\subseteq E'\otimes K_X$
$$\frac{\text{degree}(E')}{\rk(E')} < \frac{\text{degree}(E)}{\rk(E)} \quad (\text{respectively, }\le).$$
Let $\SM_{\op{Higgs}}=\SM_{\op{Higgs}}(X,r,\xi)$ be the moduli space of semistable
Higgs bundles over $X$ of rank $r$ and determinant $\xi$ (see \cite{Hi1}, \cite{Hi2},
\cite{Si}).
By Serre duality, for every $E\in \SM^s$, the cotangent bundle $T_E^*\SM^s$ can be identified with
$H^1(X,\, \End_0(E))^\vee \,= \,H^0(X, \,\End_0(E)\otimes K_X)$. Therefore,
the total space of $T^*\SM^s$ is a Zariski open subset of $\SM_{\op{Higgs}}$.

The Higgs moduli space admits a map
$$H\,:\,\SM_{\op{Higgs}} \,\longrightarrow\, W\,:=\,\bigoplus_{i=2}^r H^0(X,\,K_X^i)$$
called the Hitchin map, defined as follows. Let $$\op{Tot}(K_X)\,=\,
\underline{\op{Spec}} \op{Sym}^\bullet (K_X^{-1})$$ be the total space of the canonical bundle. Let $\pi: \op{Tot}(K_X) \longrightarrow X$ be the projection and let $x\,\in\, H^0(\op{Tot}(K_X),\,\pi^*K_X)$ be the tautological section. For each Higgs bundle $(E,\varphi)$, consider the characteristic polynomial of $\varphi$
$$\det(x\cdot \id - \pi^*\varphi) = x^r+\widetilde{s_1} x^{r-1} + \ldots + \widetilde{s_r} \, .$$
Then there exist unique sections $$s_i\,=\,\tr(\wedge^i\varphi) \,\in\, H^0(X,\,K_X^i)$$ such
that $\widetilde{s_i}=\pi^*s_i$. Observe that $\tr(\varphi)=0$
as we are working with fixed determinant, so $s_1=0$. The Hitchin map is then
\begin{equation}\label{hm}
H(E,\varphi)\,=\,(s_i)_{i=2}^r\,\in\, W\,:=\,\bigoplus_{i=2}^r H^0(X,\,K_X^i)
\end{equation}
and we call $W$ the Hitchin space. Given a point $s=(s_2,\ldots,s_r)\in W$, the zeroes of the equation
$$x^r+s_2(z)x^{r-2}+\ldots+s_r(z)\,=\,0 \, ,$$
where $z$ is a coordinate for $X$ and $x$ is the tautological vertical coordinate in
$\op{Tot}(K_X)$, define a curve $X_s\,\subset\, \op{Tot}(K_X)$, which is always an $r$-to-$1$ cover of
$X$. We call $X_s$ the spectral curve associated to $s\,\in\, W$. Let $\SD\,\subsetneq\, W$ be the
divisor of the Hitchin space consisting of points $s\in W$ whose spectral curve $X_s$ is
singular. We call $H^{-1}(\SD)$ the Hitchin discriminant.

\begin{proposition}[{\cite[Theorem 3.3]{BGM13}}]
\label{prop:recoverDiscriminant}
The Hitchin discriminant $H^{-1}(\SD)$ is the closure of the union of the complete rational curves in $\SM_{\op{Higgs}}$.
\end{proposition}

Moreover, the following propositions allow us to recover the Hitchin map of the moduli space up 
to an automorphism of the Hitchin space and identify a certain subvariety inside a piece of the 
Hitchin space from which we can obtain the isomorphism class of the curve.

\begin{proposition}[{\cite[Lemma 4.1]{BGM13}}]
\label{prop:HitchinGlobalFunctions}
When $g\ge 2$ the global algebraic functions $H^0(T^*\SM,\, {\mathcal O})$ produce a map
$$\widetilde{h}:T^*\SM \longrightarrow \op{Spec}(H^0(T^*\SM,\, {\mathcal O}))\cong W\cong \CC^m$$
which is the restriction of the Hitchin map to $T^*\SM$ up to an isomorphism of $\CC^m$, where
$m=\dim W$. Moreover, for the action $\lambda\cdot (E,\theta)\,=\,
(E,\lambda\theta)$ of $\CC^*$ on $T^*\SM$, there is a unique $\CC^*$ action on $W$
such that $\widetilde{h}$ is $\CC^*$-equivariant.
\end{proposition}

Let $W_i\,=\,H^0(X,\, K_X^i)$, and let $\pi_i\,:\,W\,=\,\bigoplus_{i=2}^r W_i\,\twoheadrightarrow\, W_i$ be the 
projection.

\begin{proposition}[{\cite[Proposition 4.2]{BGM13}}]
\label{prop:recoverDualVariety}
The intersection $\SC=\SD\cap W_r$ is irreducible, and $$\PP(\SC)\,\subset\, \PP(W_r)$$
is the dual variety of $X\,\subset\, \PP(W_r^\vee)$ for the embedding given by the linear series $|K_X^r|$.
\end{proposition}

Now, we can proceed as in \cite[Theorem 4.3]{BGM13} in order to recover the Hitchin map and obtain a Torelli type theorem for the moduli space. If $\SM=\SM(X,r,\xi)$ and $\SM'=\SM(X',r',\xi')$ are two moduli spaces of vector bundles and $\Phi\,:\,\SM\,\longrightarrow\, \SM'$ is an isomorphism, then $\Phi$ induces an isomorphism between the cotangent bundles $d(\Phi^{-1}):T^*\SM \longrightarrow T^*\SM'$ which, by Proposition \ref{prop:HitchinGlobalFunctions}, must induce
a $\CC^*$-equivariant isomorphism
$$f\,:\,W\,\cong\, \op{Spec}(H^0(T^*\SM,\, {\mathcal O})) \,\stackrel{\sim}{\longrightarrow}\, \op{Spec}
(H^0(T^*\SM',\, {\mathcal O}))\,\cong\, W'$$ such that the following diagram commutes
\begin{eqnarray*}
\xymatrixcolsep{3pc}
\xymatrix{
T^*\SM \ar[r]^{d(\Phi^{-1})} \ar[d]_{H} & T^*\SM' \ar[d]^{H'}\\
W \ar[r]^{f} & W'
}
\end{eqnarray*}
On the other hand, as $d(\Phi^{-1})$ is an isomorphism, it sends complete rational curves to complete rational curves. Therefore, by Proposition \ref{prop:recoverDiscriminant}, if we call $\SD$ and $\SD'$ the images of the discriminant loci in $W$ and $W'$ respectively, then $f(\SD)=\SD'$. 

The $\CC^*$-action on $W$ (respectively, $W'$) induces a stratification of the Hitchin space into 
subspaces corresponding to the points whose rate of decay is at least $|\lambda|^i$ for each 
$i=2,\ldots,r$. The map $f$ is $\CC^*$-equivariant, so it must preserve the stratification. In 
particular, the number of steps in the filtration must be the same, so $r=r'$. Moreover, $f$ must 
preserve the space of maximum decay, $W_r$, so $f(W_r)\,=\,W_r'$ and, as the $\CC^*$-action is 
homogeneous on $W_r$ of maximum decay, $f|_{W_r}$ must be linear (c.f. \cite[Lemma 7.1]{AG19}). 
Set $\SC\,=\,\SD\cap W_r$ and $\SC'\,=\,\SD'\cap W_r'$. We know that $f(\SD)\,=\,\SD'$, so $f(\SC)\,=\,\SC'$. By 
Proposition \ref{prop:recoverDualVariety}, the dual variety of $\PP(\SC)$ in $\PP(W_r)$ is $X$ 
and the dual variety of $\PP(\SC')$ in $\PP(W_r')$ is $X'$. As $f|_{W_r}$ is linear and 
$f(\SC)\,=\,\SC'$, we conclude that $f$ induces an isomorphism $f^\vee\,:\,\PP(W_r^\vee)\,\longrightarrow\,
\PP((W_r')^\vee)$ sending $X$ to $X'$.

Therefore, composing $\Phi$ with the pullback $\ST_{\sigma,\SO_X,+}$ if necessary,
henceforth we can assume without loss of generality that
\begin{itemize}
\item $r\,=\,r'$,

\item $X\,=\,X'$, and

\item the automorphism $\sigma\,:\,X\,\longrightarrow\, X$ induced by $f^\vee$ is the identity map.
\end{itemize}
These in particular imply that $W=W'$ and $\SD=\SD'$.

Observe that Proposition \ref{prop:HitchinGlobalFunctions} does not allow us to recover the 
Hitchin map completely, as it only recovers the Hitchin space as an affine variety, without the 
linear structure. Moreover, the $\CC^*$-action induces a stratification of $W$, but, in 
principle, the decomposition $W\,=\,\bigoplus_{i=2}^r W_i$ might not be preserved. However, 
Proposition \ref{prop:recoverDiscriminant} allows us to recover the image of the discriminant 
$\SD\subset W$ in addition to the $\CC^*$-action and the following Lemma proves that the rich 
interaction of the geometry of the divisor $\SD$ in conjunction with the $\CC^*$-action is 
sufficient to recover the linear structure and the decomposition of $W$.

\begin{lemma}
\label{lemma:recoverHitchin}
Let $f\,:\,W\,\longrightarrow\,W$ be a $\CC^*$-equivariant algebraic isomorphism such that $f(\SD)=\SD$. Then for
every $i>1$, there exists a linear automorphism $f_i\,:\,W_i\,\longrightarrow\, W_i$ such that the diagram
\begin{eqnarray*}
\xymatrixcolsep{4pc}
\xymatrix{
W \ar[r]^{f} \ar[d]_{\pi_i} & W \ar[d]^{\pi_i} \\
W_i \ar[r]^{f_i} & W_i
}
\end{eqnarray*}
is commutative.
\end{lemma}

\begin{proof}
An analogous lemma was proved for the parabolic Hitchin space in \cite[Lemma 7.11]{AG19} and the 
same proof works here by simply setting the parabolic divisor to be the zero divisor. 
\end{proof}

\begin{lemma}
\label{lemma:preserveNilpotent}
Let $f_r\,:\,W_r\,\longrightarrow\, W_r$ be the linear automorphism constructed in
Lemma \ref{lemma:recoverHitchin}. Then 
$$f_r(H^0(X,\, K_X^r(-kx_0)))\,=\,H^0(X,\, K_X^r(-kx_0))$$
for every $k\,>\,0$ and every $x_0\,\in\, X$.
\end{lemma}

\begin{proof}
We have already showed that the map $f_r\,:\,W_r\,\longrightarrow\, W_r$ is linear and preserves $\SC
\,=\,W_r\cap \SD$. Moreover, we know that $\PP(\SC)\,\subset\, \PP(W_r)$ is the dual variety to $X\,\subset
\,\PP(W_r^\vee)$ and, without loss of generality, we assumed that the dual map $f_r^\vee\,:\, \PP(W_r^\vee)
\,\longrightarrow\, \PP(W_r^\vee)$ induces the identity map on $X$. In particular, for every $x_0\,\in\, X$, the
map $f_r^\vee$ must preserve all the osculating $k$ spaces at $x_0$, so the linear map $f$ must
preserve the subspaces $H^0(X,\, K_X^r(-kx_0))$.
\end{proof}

Now that we have recovered the linear structure of the Hitchin space and the subspaces 
$H^0(X,\, K_X^r(-kx_0))\,\subset\, W_r$, we can use the following Proposition from \cite{BGM13}.

\begin{proposition}[{\cite[Proposition 5.1]{BGM13}}]
\label{prop:recoverEndSections}
Fix a generic stable bundle $E\in \SM(X,r,\xi)$, and consider the map
$$h_r\,:\,T^*_E\SM\,\cong\, H^0(X,\, \End_0(E)\otimes K_X) \,\longrightarrow\, W_r$$
constructed as the composition of the Hitchin map with the projection $W\twoheadrightarrow W_r$. Then for every $x_0\in X$
$$H^0(X,\, \End_0(E)\otimes K_X(-x_0)) \,=\, \{\psi\,\in\, H_{x_0}\,\mid\,
h_r(\psi+\varphi)\,\in\, H_{x_0}\ \ \forall\ \varphi\,\in\, h_r^{-1}(H_{x_0}) \}\, ,$$
where $H_{x_0}\,=\,H^0(X,\, K_X^r(-x_0))\,\subset\, W_r$.
\end{proposition}

If $E$ and $E'$ are generic stable vector bundles such that $\Phi(E)\,=\,E'$, the map
$$d(\Phi^{-1})\,:\,H^0(X,\,\End_0(E)\otimes K_X) \,\longrightarrow \,H^0(X,\, \End_0(E') \otimes K_X)$$
satisfies the condition
$$d(\Phi^{-1})(H^0(X,\, \End_0(E) \otimes K_X(-x_0))) \,=\, H^0(X,\, \End_0(E') \otimes K_X(-x_0))$$
for every $x_0\,\in\, X$.

\begin{corollary}
\label{cor:recoverNilpotent}
Suppose that $g\ge 4$. For every $x\,\in\, X$, and every $i>1$, the linear subspace
$$H^0(X,\, K_X^i(-x))\,\subseteq\, W_i$$
is preserved by the linear map $f_i\,:\,W_i\,\longrightarrow\, W_i$.
\end{corollary}

\begin{proof}
We can adapt \cite[Lemma 7.18]{AG19} to the compact case. Let $\SU\,\subset\,\SM$ be the open
nonempty subset of vector bundles $E\,\in \,\SM$ such that both $E$ and $\Phi(E)$ are stable and
also generic in the sense of Proposition \ref{prop:recoverEndSections}. Let $\SU'=\Phi(\SU)$.
Since $g\ge 4$ (by assumption), we have
$$r\deg(K_X(-x))=r(2g-3) \ge 2(2g-3)>2g\, .$$
Therefore, applying \cite[Lemma 3.2]{AG19} to $L\,=\,
K_X(-x)$ and the open subsets $\SU$ and $\SU'$ we obtain that
$$\bigoplus_{i=2}^r H^0(X,\, K_X^i(-ix))\,\subseteq\, W$$
is the linear subspace of $W$ generated by the images
$$H(H^0(X,\, \End_0(E)\otimes K_X(-x)))\, \subset\, W\, ,$$
where $H$ is the map in \eqref{hm},
when $E$ runs over $\SU$ or
by the images $$H'(H^0(X,\, \End_0(E')\otimes K_X(-x)))$$ for $E'\,\in \,\SU'$.

By Proposition \ref{prop:recoverEndSections}, for every $E\,\in\, \SU$ with $$E'\,=\,\Phi(E)\in \SU'\, ,$$ 
the image of $H^0(X,\, \End_0(E)\otimes K_X(-x))$ by $d(\Phi^{-1})$ is $H^0(X,\, \End_0(E')\otimes 
K_X(-x))$. As $\Phi(\SU)=\SU'$, the union of the images $H(H^0(X,\, \End_0(E)\otimes K_X(-x)))$ for 
$E\in \SU$ is the same as the union of the images $H'(H^0(X,\,\End_0(E')\otimes K_X(-x)))$ for $E'\in 
\SU'$, so $f\,:\,W\,\longrightarrow\,W$ preserves the subspace $\bigoplus_{i=2}^r H^0(X,\,K_X^i(-ix))\,\subseteq\, W$.

By Lemma \ref{lemma:recoverHitchin}, the map $f\,:\,W\,\longrightarrow\, W$ is diagonal with respect to the 
decomposition $W=\bigoplus_{i=2}^r W_i$, so each individual piece $H^0(X,\,K_X^i(-ix))$ must be 
preserved by $f_i:W_i\longrightarrow W_i$.

For $i\,>\,1$, the curve $X$ is embedded in $\PP(W_i^\vee)$ via the linear system $|K_X^i|$, and for every $x\in X$, its osculating $i$-space is precisely
$$\op{Osc}_i(x)\,=\,\PP \left (\ker(H^0(K_X^i)^\vee \longrightarrow H^0(K_X^i(-ix))^\vee)\right)\, .$$
Therefore, the map $f_i^\vee \,:\, \PP(W_i^\vee)\,\longrightarrow\, \PP(W_i^\vee)$ preserves $\op{Osc}_i(x)$ for
all $x\in X$. By \cite[Lemma 7.19]{AG19} (which is based on an argument in
\cite[p.~1250052-23]{BGM12}), if a map between projective spaces preserves the osculating
function of an embedded smooth projective curve $\op{Osc}_i\,:\,X\,\longrightarrow\,\op{Gr}(i+1,m+1)$, it must
preserve all the osculating functions for each $i\,\ge\, 1$. In particular, $\op{Osc}_1$
is preserved, so $f_i$ must preserve the hyperplanes $H^0(X,\, K_X^i(-x))\,\subset\, H^0(X,\, K_X^i)$.
\end{proof}

Given a vector bundle $E$, for each $x\in X$ we can consider the nilpotent cone spaces
$$\SN_{E,x} = \left \{A\in \End_0(E)\otimes K_X|_x \, \middle | \, A^r=0 \right \} \subset \End_0(E)\otimes K_X|_x$$
Letting $x\in X$ vary, we construct a subscheme $\SN_E\,\subset\, \End_0(E)\otimes K_X$ over
$X$ that we shall call the nilpotent cone bundle. 

\begin{proposition}
\label{prop:nilpotentCone}
If $\Phi:\SM\longrightarrow \SM'$ is an isomorphism, and $E\in \SM$ is a generic point whose image
is $E'\in \SM'$, then there is an isomorphism $$\Phi_{\End}:\End_0(E)\otimes K_X
\longrightarrow \End_0(E') \otimes K_X$$ of vector bundles over $X$ which preserves
the nilpotent cone bundles, i.e., $\Phi_{\End}(\SN_E)=\SN_{E'}$.
\end{proposition}

\begin{proof}
Let $E\,\in\, \SM$ be a generic vector bundle. Consider the sub-bundle $\SE$ of the trivial vector bundle
$H^0(X,\, \End_0(E)\otimes K_X)\times X\,\longrightarrow\, X$ whose fiber over each point $x\,\in\, X$
is $H^0(X,\, \End_0(E)\otimes K_X(-x))$. By \cite[Remark 2.3]{BGM13}, for a generic vector bundle $E\,\in\, \SM$ we have
$H^0(X,\, \End_0(E)(x))\,=\,0$ for every $x\in X$. Therefore, $$H^1(X,\, \End_0(E)\otimes K_X(-x))\,=\,
H^0(X,\, \End_0(E)(x))^\vee\,=\,0$$ and we have a short exact sequence
$$0\longrightarrow H^0(X,\, \End_0(E)\otimes K_X(-x)) \longrightarrow H^0(X,\, \End_0(E)\otimes K_X) \longrightarrow \End_0(E)\otimes K_X|_x \longrightarrow 0\, .$$
Thus, we have a short exact sequence
$$0\longrightarrow \SE \longrightarrow H^0(X,\, \End_0(E)\otimes K_X)\otimes \SO_X \longrightarrow \End_0(E)\otimes K_X \longrightarrow 0$$
Similarly, assuming that $E'=\Phi(E)$ is generic, let $\SE'$ be the vector bundle fitting in the short exact sequence
$$0\longrightarrow \SE' \longrightarrow H^0(X,\, \End_0(E')\otimes K_X)\otimes \SO_X \longrightarrow \End_0(E')\otimes K_X \longrightarrow 0\, .$$
By Proposition \ref{prop:recoverEndSections}, the isomorphism
$$d(\Phi^{-1})\,:\,H^0(X,\, \End_0(E)\otimes K_X)\otimes \SO_X \,\longrightarrow \,H^0(X,\, \End_0(E')
\otimes K_X)\otimes \SO_X$$ sends the image of $\SE$ to $\SE'$, so it must induce an isomorphism $\Phi_{\End}$ on the quotients making the following diagram commute
\begin{eqnarray*}
\xymatrix{
0 \ar[r] & \SE \ar[r] & H^0(X,\,\End_0(E)\otimes K_X)\otimes \SO_X \ar[r] \ar[d]^{d(\Phi^{-1})} & \End_0(E)\otimes K_X \ar[d]^{\Phi_{\End}} \ar[r] & 0\\
0 \ar[r] & \SE' \ar[r] & H^0(X,\, \End_0(E')\otimes K_X)\otimes \SO_X \ar[r] & \End_0(E')\otimes K_X \ar[r] & 0
}
\end{eqnarray*}

Moreover, for generic $E\in \SM$, the preimage of the nilpotent cone $\SN_{E,x}$ under the surjective map
$H^0(X,\,\End_0(E) \otimes K_X) \longrightarrow \End_0(E)\otimes K_X|_x$
is
$$N_{E,x} = \left \{ \psi \in H^0(X,\, \End_0(E)\otimes K_X) \, \middle | \, \forall i>1 \, h_i(\psi) \in
H^0(X,\,K_X^i(-x)) \right \}\, .$$

Observe that, by Corollary \ref{cor:recoverNilpotent}, if $E$ and $E'=\Phi(E)$ are generic, then $d(\Phi^{-1})(N_{E,x})=N_{E',x}$. Therefore, we have $\Phi_{\End}(\SN_E)=\SN_{E'}$.
\end{proof}

\begin{theorem}
\label{theorem:autoVB}
Let $X$ and $X'$ be two smooth projective curves of genus $g\,\ge\, 4$ and $g'\,\ge\, 4$ respectively, and
let $\xi$ and $\xi'$ be line bundles over $X$ and $X'$ respectively. Let $\SU\,\subset\, \SM(X,r,\xi)$
and $\SU'\,\subset \,\SM(X',r',\xi')$ be Zariski open subsets such that
\begin{eqnarray*}
\op{codim}(\SM(X,r,\xi)\backslash \SU,\SM(X,r,\xi))\,\ge\, 2\\
\op{codim}(\SM(X',r',\xi')\backslash \SU',\SM(X',r',\xi'))\,\ge\, 2.
\end{eqnarray*}
If $\Phi\,:\, \SU\,\stackrel{\sim}{\longrightarrow}\, \SU'$ is an isomorphism, then
\begin{enumerate}
\item $r\,=\,r'$,
\item there exists an isomorphism $\sigma\,:\,X'\,\stackrel{\sim}{\longrightarrow}\, X$, and
\item there exist a line bundle $L$ over $X$ and $s\in \{\pm 1\}$ such that
$$\Phi(E) \,\cong\, \ST_{\sigma,L,s}(E)$$
for every $E\,\in\, \SU$.
\end{enumerate}
\end{theorem}

\begin{proof}
By Proposition \ref{prop:extend2bir}, we can assume that $$\SU\,=\,\SM(X,r,\xi)\,=\,\SM\ \ \text{ and }\ \
\SU'\,=\,\SM(X',r',\xi')\,=\,\SM'\, .$$ By the Torelli theorem \cite{KP95,BGM13}, $r=r'$ and $X\cong X'$. Let $W$ and $W'$ be the respective Hitchin spaces for $\SM$ and $\SM'$, and let us denote by $\SD\subset W$
and $\SD'\subset W'$ the divisors corresponding to singular spectral curves. Earlier in this
section, it was shown that the isomorphism $\Phi$ induces a $\CC^*$-equivariant isomorphism
$$f\,:\,W\,\cong \,\op{Spec}(H^0(T^*\SM,\, {\mathcal O})) \,\stackrel{\sim}{\longrightarrow}
\,\op{Spec}(H^0(T^*\SM',\, {\mathcal O}))\cong W'$$ such that the following diagram commutes
\begin{eqnarray*}
\xymatrixcolsep{3pc}
\xymatrix{
T^*\SM \ar[r]^{d(\Phi^{-1})} \ar[d]_{H} & T^*\SM' \ar[d]^{H'}\\
W \ar[r]^{f} & W'
}
\end{eqnarray*}
and $f(\SD)=\SD'$. Moreover, as $f$ is $\CC^*$-equivariant, it must preserve the subspace of maximum decay, so $f(W_r)=W_r'$. As a consequence of Proposition \ref{prop:recoverDualVariety}, $f|_{W_r}$ induces an isomorphism $\sigma:X\longrightarrow X'$. Substituting $\Phi$ by
$\ST_{\sigma,\SO_X,1}\circ \Phi$, we reduce the proof of the theorem to the case where $X=X'$
and $\sigma=\id$.

Now, we can apply Lemma \ref{lemma:recoverHitchin} to conclude that there exist linear
automorphisms $f_i:W_i\longrightarrow W_i$ such that the following diagram commutes
\begin{eqnarray*}
\xymatrixcolsep{4pc}
\xymatrix{
W \ar[r]^{f} \ar[d]_{\pi_i} & W \ar[d]^{\pi_i} \\
W_i \ar[r]^{f_i} & W_i
}
\end{eqnarray*}
for every $i>1$.
Moreover, by Lemma \ref{lemma:preserveNilpotent}, for each $x\in X$ and every $i>1$, the map $f_i$ preserves the hyperplane $H^0(X,\, K_X^i(-x))\,\subset\, W_i$.
Let us consider the open nonempty subset $\SU\subset \SM$ consisting of generic stable vector bundles $E\in \SM$ such that $\Phi(E)\in \SM'$ is also generic and stable. Let $\SU'=\Phi(\SU)$. By Proposition \ref{prop:nilpotentCone}, the
map $$d(\Phi^{-1})\,:\,T_E^*\SM\,\cong\, H^0(X,\,\End_0(E)\otimes K_X) \, \longrightarrow\, T_{E'}^*\SM'
\,\cong\, H^0(X,\,\End_0(E')\otimes K_X)$$ induces an isomorphism of nilpotent cone bundles over $X$
\begin{eqnarray*}
\xymatrix{
\SN_E \ar[r] \ar[d] & \SN_{E'} \ar[d] \\
X \ar@{=}[r] & X
}
\end{eqnarray*}
By \cite[Lemma 5.2]{BGM13}, this isomorphism induces an isomorphism of the flag bundles associated to $E$ and $E'$
\begin{eqnarray*}
\xymatrix{
\op{Fl}(E) \ar[r] \ar[d] & \op{Fl}(E') \ar[d] \\
X \ar@{=}[r] & X
}
\end{eqnarray*}
The isomorphism induced between the global vertical fields of both flag bundles corresponds to an isomorphism of the Lie algebra bundles
\begin{eqnarray*}
\xymatrix{
\End_0(E) \ar[r] \ar[d] & \End_0(E') \ar[d] \\
X \ar@{=}[r] & X
}
\end{eqnarray*}
By \cite[Lemma 5.4]{BGM13}, if $\End_0(E)$ and $\End_0(E')$ are isomorphic as Lie algebra bundles, then there exist a line bundle $L$ over $X$ and $s\in \{\pm 1\}$ such that 
\begin{equation}
\label{eq:theorem1}
E'=\ST_{\id,L,s}(E)\, .
\end{equation}
Therefore, we have shown that for each $E \in \SU$ there exist $L$ and $s$ such that
$\ST_{\id,L,s}^{-1} \circ \Phi (E)\, =\, E$. We need to prove that there exists a pair $(L,s)$ such that 
$$\Phi(E)\,=\,\ST_{\id,L,s}(E)$$ for every $E\,\in\, \SM$. Observe that taking determinant of both sides of
\eqref{eq:theorem1} we obtain that
$$\xi'\cong (\xi\otimes L^r)^s\, .$$
Let
$$\TT_{\xi,\xi'}=\left \{(L,s) \, \middle | \, L\in \Pic(X), \, s\in \{\pm 1\}, \, \xi'\cong (\xi\otimes L^r)^s \right \}\, .$$
Then, we have shown that for every $E\in \SU$, there exists $(L,s)\in \TT_{\xi,\xi'}$ such
that $E\in \op{Fix}(\ST_{\id,L,s}^{-1}\circ \Phi)$. In particular, we have proved that
$$\SU = \bigcup_{(L,s)\in \TT_{\xi,\xi'}} \op{Fix}(\ST_{\id,L,s}^{-1}\circ \Phi) \cap \SU\, .$$
The $r$-torsion part of the Jacobian of $X$ is finite, so the set $\TT_{\xi,\xi'}$ is finite. The set of fixed points of an automorphism is closed and $\ST_{\xi,\xi'}$ is finite, so $\SU$ is a finite union of closed subsets. As $\SM$ is irreducible,
the open subset $\SU$ is also irreducible, so there exists $(L,s)\in \ST_{\xi,\xi'}$ such that
$\SU= \op{Fix}(\ST_{\id,L,s}^{-1}\circ \Phi)$.

Therefore, $\Phi(E)\,=\,\ST_{\id,L,s}(E)$ for every $E\,\in \,\SU$. As $\ST_{\id,L,s}$ clearly extends 
to entire $\SM$, and $\SU$ is dense in $\SM$, the proof of the theorem is complete.
\end{proof}

\begin{corollary}
Let $X$ and $X'$ be smooth projective curves of genus at least $4$. Then the following are equivalent
\begin{enumerate}
\item The moduli spaces $\SM(X,r,\xi)$ and $\SM(X',r',\xi')$ are $2$-birational.
\item The moduli spaces $\SM(X,r,\xi)$ and $\SM(X',r',\xi')$ are isomorphic.
\item The curves $X$ and $X'$ are isomorphic, $r\,=\,r'$ and $\deg(\xi') \,\equiv\, \pm \deg(\xi)\ \pmod{r}$.
\end{enumerate}
\end{corollary}

\begin{proof}
(1) and (2) are equivalent by Proposition \ref{prop:extend2bir}. Let
$$\Phi\,:\,\SM(X,r,\xi) \,\longrightarrow \,\SM(X',r',\xi')$$ be an isomorphism.
By Theorem \ref{theorem:autoVB}, if the moduli spaces are isomorphic, $X\,\cong\, X'$, $r\,=\,r'$ and
there exist an isomorphism
$\sigma\,:\,X\,\longrightarrow\, X'$, a line bundle $L$ over $X$ and $s\,\in\, \{\pm 1\}$ such that
$$\Phi(E)\,=\,\ST_{\sigma,L,s}(E)$$ for every $E\,\in\, \SM(X,r,\xi)$. Then, in particular, we must
have $\xi'\,=\,\sigma^*(\xi\otimes L^r)^s$. Computing degrees we have
$\deg(\xi')\,=\,s(\deg(\xi)+r\deg(L))$ so the desired relation is obtained.

On the other hand, suppose that $X=X'$ and $r=r'$. If $$\deg(\xi')\,\equiv\, s\deg(\xi)\ \ \pmod{r}\, ,$$ with 
$s\in \{\pm 1\}$ then $r$ divides $\deg(\xi'\otimes \xi^{-s})$, so there exists $L\in \Pic(X)$ 
such that $L^r=\xi'\otimes \xi^{-s}$. Therefore, $\xi'= \xi^s \otimes L^r =(\xi \otimes (L^s)^r)^s$ and, thus, we 
have an isomorphism $\ST_{\id,L^s,s}:\SM(X,r,\xi) \longrightarrow \SM(X,r,\xi')$.
\end{proof}

\section{Moduli space of framed bundles}
\label{section:framedBundles}

Let $X$ be a smooth complex projective curve. Fix a point $x\,\in\, X$. A framed bundle
on $(X,\,x)$ is a pair $(E,\,\alpha)$ consisting of a vector bundle $E$ over $X$ and a
nonzero $\CC$-linear homomorphism
$$\alpha\,:\,E_x \,\longrightarrow\, \CC^r\, .$$

Given a real number $\tau>0$, we say that a framed bundle $(E,\,\alpha)$ is $\tau$-stable (respectively $\tau$-semistable) if for all proper subbundles $0\,\subsetneq\,
E'\,\subsetneq\, E$
$$\frac{\text{degree}(E')-\epsilon(E',\alpha)\tau}{\rk(E')} < \frac{\text{degree}(E)-\tau}{\rk(E)} \quad 
(\text{respectively, }\le)$$
where
$$\epsilon(E',\alpha)=\left\{ \begin{array}{ll}
1 & \text{if } E_x'\,\not\subseteq\, \ker(\alpha)\\
0 & \text{if }E_x'\,\subseteq\, \ker(\alpha).
\end{array} \right.$$

In the general framework of framed modules introduced in \cite{HL95modules}, a framed 
bundle is a framed module with respect to the reference sheaf $\SO_x^{\oplus r}$. The 
stability condition for framed bundles described here coincides with the stability condition 
defined by Huybrechts and Lehn for framed modules. Fix a line bundle $\xi$
on $X$. Let $\SF\,=\,\SF(X,x,r,\xi,\tau)$ be the moduli space of $\tau$-semistable framed 
bundles $(E,\,\alpha)$ on $(X,\,x)$ with ${\rm rank}(E)\,=\,r\, \geq\, 2$ and $\det(E)
\,=\, \bigwedge^r E\,\cong\, \xi$; it is a complex projective variety \cite{HL95modules}.

Given a fixed rank $r\, \geq\, 2$ and a degree $d$, we say that a stability parameter $\tau$ is generic if there
do not exist integers $d'\,\in \,\mathbb{Z}$ and $0\,<\,r'\,<\,r$ such that
$$rd'-r'd\,=\,-r'\tau \quad \text{or} \quad rd'-r'd\,=\,(r-r')\tau\, .$$
By definition, given a fixed determinant $\xi$ of degree $d$, if $\tau$ is a generic stability
parameter, then there exists no strictly semistable framed bundle in $\SF(X,x,r,\xi,\tau)$. In
particular, by \cite[Theorem 4.1]{HL95modules} and the computation in \cite[Lemma 1.3]{BGM10}
this implies that $\SF(X,x,r,\xi,\tau)$ is smooth. Moreover, by \cite[Theorem 0.1]{HL95modules}, the
moduli space $\SF(X,x,r,\xi,\tau)$ is fine, i.e., it admits a Poincar\'e family.

Let $$\SF^{\op{ss-vb}}\,=\,\SF^{\op{ss-vb}}(X,x,r,\xi,\tau)$$ be the open subset consisting of pairs 
$(E,\alpha)$ such that the vector bundle $E$ is semistable. Similarly, let $$\SF^{\op{s-vb}}
\,=\,\SF^{\op{s-vb}}(X,x,r,\xi,\tau)$$
be the 
subset of pairs $(E,\alpha)$ such that $E$ is stable. Then there is a forgetful map
\begin{eqnarray*}
\xymatrixrowsep{0.05pc}
\xymatrixcolsep{0.3pc}
\xymatrix{
f&:&\SF^{\op{ss-vb}} \ar[rrrr] &&&& \SM\\
&& (E,\alpha) \ar@{|->}[rrrr] &&&& E.
}
\end{eqnarray*}

\begin{proposition}
\label{prop:forgetfulStable}
Let $E$ be a semistable vector bundle, and let $$\alpha:E|_x\longrightarrow \CC^r$$ be an
isomorphism. Then $(E,\,\alpha)$ is $\tau$-stable for every $\tau>0$.
\end{proposition}

\begin{proof}
If $\alpha$ is an isomorphism, then for every subbundle $E'\,\subsetneq\,
E$, we have $\alpha|_{E'}\not\cong 0$, so $\epsilon(E',\alpha)=1$. Now as
$\rk(E')<\rk(E)$ and $\tau>0$, we have
$$\frac{-\epsilon(E',\alpha)\tau}{\rk(E')} = \frac{-\tau}{\rk(E')} < \frac{-\tau}{\rk(E)}$$
so if $E$ is semistable, then
$$\frac{\text{degree}(E')}{\rk(E')}-\frac{-\epsilon(E',\alpha)\tau}{\rk(E')} <
\frac{\text{degree}(E)}{\rk(E)}- \frac{\tau}{\rk(E)}$$
for every $E'\,\subsetneq\, E$.
\end{proof}

\begin{corollary}
\label{cor:forgetfulEquidimensional}
The forgetful map $f:\SF^{\op{ss-vb}}\longrightarrow \SM$ is surjective and its restriction to $f^{-1}(\SM^s)$ has equidimensional fibers.
\end{corollary}

\begin{proof}
Surjectivity is a direct consequence of Proposition \ref{prop:forgetfulStable}. Moreover, for each
$E\in \SM^s$, the preimage $f^{-1}(E)$ can be clearly identified with a subset
of $\PP(\Hom(E|_x,\CC^r))$. As $\tau$-stability is an open condition, it follows that
$$\dim(f^{-1}(E))=\dim(\PP(\Hom(E|_x,\CC^r))) = r^2-1\, .$$
\end{proof}

For every subset $\SU\subset \SM$, let us denote $\SF_\SU=f^{-1}(\SU)$. By Corollary 
\ref{cor:forgetfulEquidimensional} and Proposition \ref{prop:forgetfulStable}, if $\SU\subset 
\SM^s$ and $E\in \SU$, then $f^{-1}(E)$ can be identified with a subset of $\PP(\Hom(E|_x,\CC^r))$ 
that contains $\PP(\op{Iso}(E|_x,\CC^r))$. Let $\SF_\SU^0$ be the subset of $\SF_\SU$ consisting of pairs $(E,\alpha)$ such that $\alpha$ is an isomorphism. Clearly, it is a $\PGL_r(\CC)$-bundle on 
$\SU$.

We shall compute the codimension of the complement of certain relevant open subsets.

\begin{lemma}
\label{lemma:codimUnstable}
Let $k\,>\,0$ be an integer, and let $\tau$ be a generic stability parameter. If $
(r-1)(g-2)+g+1\, \geq\, k$, then the codimension of the closed
subset $\SF\backslash \SF^{\op{s-vb}}$ in $\SF$ is at least $k$. In particular, if $g\,\ge\, 4$,
the space has codimension at least $2$ for every rank.
\end{lemma}

\begin{proof}
Let $(E,\,\alpha)$ be a $\tau$-stable framed bundle such that $E$ is not stable. Let $d=\deg(E)$, $r=\rk(E)$. Then there exists a subbundle $F\subset E$ with rank $\rk(F)\,=\,r'$ and degree $\deg(F)\,=\,d'$
such that
$$\frac{\deg(F)}{\rk(F)} \,\ge\, \frac{\deg(E)}{\rk(E)}$$
or, equivalently
$$d'r-dr' \ge 0$$
in particular, $\alpha\vert_F\,\not=\, 0$, as otherwise $(E,\alpha)$ would be $\tau$-unstable. Therefore,
we have $\epsilon(F,\alpha)\,=\,1$, so, as $\tau$ is generic,
$$\frac{d'-\tau}{r'} < \frac{d-\tau}{r}$$
and we have
$$0\le d'r-dr'<(r-r')\tau\, .$$

From the codimension estimation in \cite[p.~247--248]{Bh} it follows that the locus of all $\tau$-stable
framed bundles such that underlying vector bundle is not semistable has codimension at least $(r-1)(g-2)+g+1$
in $\mathcal F$. The estimation in \cite[p.~247--248]{Bh} is for a Harder--Narasimhan filtration of fixed type,
but there are only finitely many Harder--Narasimhan filtrations that occur in a given bounded family.
Therefore, we may assume that $E$ and $F$ are semistable with
$$\frac{\deg(F)}{\rk(F)} \,=\, \frac{\deg(E)}{\rk(E)}\, .$$
Then $E$ fits in a short exact sequence
$$0\longrightarrow F \longrightarrow E \longrightarrow Q \longrightarrow 0$$
where $Q$ is also semistable. Let $d''=\deg(Q)=d-d'$, $r''=\rk(Q)=r-r'$. Then from the previous discussion
it follows that
$$d'r''-d''r' =0$$
Let us compute the dimension of the space $\SZ$ of all possible vector
bundles $E$ with rank $r$ and determinant $\xi$ that can be generated as
extensions of semistable bundles $F$ and $Q$ with the given rank and degree restrictions.
Given fixed $F$ and $Q$, the space of possible vector bundles resulting from
extensions has dimension $$\dim H^1(X,\, \Hom(Q,\,F))-1\,=\,\dim H^0(X,\, F^\vee\otimes Q\otimes K_X)
-1\, .$$ As $F$ and $Q$ are semistable, so is $F^\vee\otimes Q\otimes K_X$. As $g\ge 2$, then
$$\deg(F^\vee\otimes Q\otimes K)\,=\,r'r''(2g-2)-d'r''+d''r' \,=\, r'r''(2g-2)$$
So we have that for $g\ge 2$
$$0\le \frac{\deg (F^\vee\otimes Q\otimes K)}{\rk(F^\vee\otimes Q\otimes K)} = 2g-2$$
and, from Clifford's inequality \cite[Theorem 2.1]{BPGN97}, we know that 
\begin{multline*}
\dim(h^0(F^\vee\otimes Q \otimes K_X))\le \rk(F^\vee\otimes Q\otimes K_X)+\frac{\deg(F^\vee\otimes Q\otimes K_X)}{2} \\
= r'r''+\frac{r'r''(2g-2)}{2}=r'r''g.
\end{multline*}
We know that $F\in \SM(X,r',d')$ and $Q\in \SM(X,r'',d'')$ and we need to impose the extra
condition that $\deg(E)=\xi$, so
\begin{multline*}
\dim(\SZ)\le \max_{0<r'<r}\left\{\dim(\SM(X,r',d'))+\dim(\SM(X,r'',d''))+r'r''g-1-g\right\}\\
=\max_{0<r'<r}\left\{(r')^2(g-1)+1+(r'')^2(g-1)+1+r'r''g-1-g\right\}.
\end{multline*}
Observe that for each $E\,\in\, \SZ$ the space of possible $\alpha\,:\,E|_x\,\longrightarrow\,
\CC^r$ has dimension $r^2-1$, so
$$\dim(\SF\backslash \SF^{\op{ss-vb}})\,\le\, \dim(\SZ)+r^2-1\, .$$
Therefore,
\begin{multline*}
\dim(\SF)-\dim(\SF\backslash \SF^{\op{ss-vb}})\,\geq\, (r^2-1)g-\dim(\SZ)-r^2+1 \\
\,=\, \min_{0<r'<r}\left\{r'r''(g-2)\right\}\,=\,(r-1)(g-2)\,.
\end{multline*}
The genus condition in the statement of the lemma implies
that $$(r-1) (g-2)\,\ge\, k\, ,$$ so we obtain the desired bound on
codimension.
\end{proof}

\begin{corollary}
\label{cor:codimF_U}
Let $k\,>\,0$ be an integer, and let $\tau$ be a generic stability parameter. Let $\SU\,\subset\, \SM^s$ be
an open subset such that $\op{codim}(\SM^s\backslash \SU,\SM^s)\,\ge\, k$. If
$(r-1)(g-2)+g+1\, \geq\, k$, then
$$\op{codim}(\SF\backslash \SF_\SU,\SF)\ge k\, .$$
In particular, if $g\ge 2$ then for every open subset $\SU\subset \SM^s$ whose complement has codimension at least 2, the complement of $\SF_U$ has codimension at least $2$.
\end{corollary}

\begin{proof}
By Lemma \ref{lemma:codimUnstable}, $\op{codim}(\SF\backslash \SF^{\op{s-vb}},\SF)\,\ge\, k$, so it is enough
to prove that $$\op{codim}(\SF^{\op{s-vb}}\backslash \SF_\SU,\SF^{\op{s-vb}})\,\ge\, k\, .$$ By
Corollary \ref{cor:forgetfulEquidimensional}, the forgetful map is equidimensional, so
$$\op{codim}(\SF^{\op{s-vb}}\backslash \SF_\SU,\SF^{\op{s-vb}})\,=\,
\op{codim}(\SM^s\backslash \SU, \SM^s)\,\ge\, k\, .$$
\end{proof}

\begin{lemma}
\label{lemma:genericSubset}
If $g\,\ge \,\max \left\{1+\frac{\tau+k-1}{r-1},\, 2+\frac{k}{r-1}\right\}$, then there exists an open subset $\SU\subset \SM^s$ such that 
\begin{enumerate}
\item $\op{codim}(\SM\backslash \SU,\SM)\,\ge\, k$

\item $\op{codim}(\SF\backslash \SF_\SU,\SF)\,\ge\, k$

\item For each $E\,\in\, \SU$, $$\op{codim}(\PP(\Hom(E|_x,\CC^r))\backslash f^{-1}(E),
\, \PP(\Hom(E|_x,\CC^r)))\ge 2\, .$$
\end{enumerate}
In particular, for $g\, \ge \, {\rm max}\{2+\tau,\, 4\}$ the considered spaces have
codimension at least two for any rank.
\end{lemma}

\begin{proof}
Property (2) follows from (1) by Corollary \ref{cor:codimF_U}, so it is enough to construct a 
subset that satisfies (1) and (3). Let $$\SZ=\left\{E\in \SM^s | \exists L\subset E \, 
\deg(L)>\frac{\deg(E)-\tau}{\rk E} \right\}\, .$$ Take $E\in \SM^s\backslash \SZ$ and let 
$\alpha\,:\,E|_x\,\longrightarrow\,\CC^r$ be any nonzero map.

If $\alpha$ is an isomorphism, then $(E,\alpha)$ is $\tau$-stable by Proposition \ref{prop:forgetfulStable}, so $(E,\alpha)\in f^{-1}(E)$.

Suppose that $\dim \ker(\alpha)=1$, and let $E'\subset E$ be any subbundle. As
$\dim \ker(\alpha)=1$ and $E'$ is saturated, if $\rk(E')>1$, then we have $\epsilon(E',\alpha)=1$, so
$$\frac{\deg(E')-\epsilon(E',\alpha)\tau}{\rk(E')}=\frac{\deg(E')-\tau}{\rk(E')}<\frac{\deg(E)-\tau}{\rk(E)}$$
because $\tau>0$, $\rk(E')<\rk(E)$ and $\frac{\deg(E')}{\rk(E')} < \frac{\deg(E)}{\rk(E)}$ by stability of $E$. On the other hand, if $\rk(E')=1$, then
$$\frac{\deg(E')-\epsilon(E',\alpha)\tau}{\rk(E')}=\deg(E')-\epsilon(E',\alpha) \tau\le \deg(E')\le \frac{\deg(E)-\tau}{\rk(E)}$$
because $E\not\in \SZ$. Therefore, $(E,\alpha)$ is $\tau$-semistable and, as $\tau$ is generic, $(E,\alpha)\in f^{-1}(E)$.

Therefore, $\{\alpha\in \PP(\Hom(E|_x,\CC^r))\,\mid\, \rk(\alpha)\ge r-1\}\subseteq f^{-1}(E)$.

As $\op{codim}(\{\alpha\in \PP(\Hom(E|_x,\CC^r)) \,\mid\, \rk(\alpha) \le r-2\})$ is $4$ for $r>2$,
and it is $3$ for $r=2$, we conclude that condition (3) holds for any $E\in \SM\backslash \SZ$.

Take $\SU=\SM^s\backslash\SZ$. Let us prove that $\op{codim}(\SM\backslash \SU,\SM)\ge k$. By
\cite[Lemma 2.3]{BGM10}, $\op{codim}(\SM\backslash \SM^s,\SM) \ge (r-1)(g-1)$, so the given bound on genus implies that $\op{codim}(\SM\backslash \SM^s,\SM)\ge k$. Therefore, it is enough to prove that $\op{codim}(\SZ,\SM)\ge k$. Let $E\in \SZ$. Then $E$ fits in a short exact sequence
$$0\longrightarrow L \longrightarrow E \longrightarrow Q \longrightarrow 0$$
where $L$ is a line bundle of degree $d'$ and $Q$ is a rank $r-1$ bundle of degree $d''\,=\,\deg(E)-d'$ such that
$$\frac{\deg(E)-\tau}{\rk(E)}<d'<\frac{\deg(E)}{r} < \frac{d''}{r-1}$$
or, equivalently
$$0<\deg(E)-rd'<\tau$$
Moreover, generically, we can choose $Q$ to be stable.

The space of possible line bundles $L$ has dimension $g$ and the space of choices for $Q$ has dimension $(r-1)^2(g-1)+1$. Given fixed $L$ and $Q$, the space of
vector bundles $E$ fitting in the previous short exact sequence has dimension $\dim H^1(\Hom(Q,\,L))-1$. If we take into account that $E$ has to have fixed determinant $\xi$ in order to belong to $\SM(X,r,\xi)$, then this dimension drops to $\dim(H^1(\Hom(Q,L)))-1-g$. Observe that for stable
$Q$, as $\deg(L)<\frac{\deg(Q)}{\rk(Q)}$ we have $H^0(\Hom(Q,L))=0$, so
\begin{multline*}
\dim(H^1(\Hom(Q,\,L)))\,=\,-\chi(\Hom(Q,\,L))\,=\,d''-(r-1)d'+(r-1)(g-1)\\
=\,\deg(E)-rd'+(r-1)(g-1).
\end{multline*}
Therefore
\begin{multline*}
\dim(\SZ)=\max_{\frac{\deg(E)-\tau}{\rk(E)}<d'<\frac{\deg(E)}{\rk(E)}} \{g+(r-1)^2(g-1)+1+\deg(E)-rd'+(r-1)(g-1)-1-g\}\\
=\max_{\frac{\deg(E)-\tau}{\rk(E)}<d'<\frac{\deg(E)}{\rk(E)}} \{r(r-1)(g-1)+\deg(E)-rd'\}<r(r-1)(g-1)+\tau .
\end{multline*}
So
$$\dim(\SM)-\dim(\SZ)>(r^2-1)(g-1)-r(r-1)(g-1)-\tau=(r-1)(g-1)-\tau\, .$$
The genus condition in the statement of the theorem implies that $(r-1)(g-1)-\tau\ge k-1$, so $\op{codim}(\SZ)\ge k$.
\end{proof}

\section{The $\PGL_r(\CC)$-action and Torelli theorem}
\label{section:Torelli}

Make $\PGL_r(\CC)$ act on $\SF$ by composition with the framing $\alpha$. Given a matrix $[G]\in \PGL_r(\CC)$, where $G\in \GL_r(\CC)$ is any representative of the
projective class, the automorphism $G\,:\,\CC^r \,\longrightarrow\, \CC^r$ produces the
self-map 
$$(E,\,\alpha)\,\longmapsto\, (E,\,G\circ \alpha)$$
of framed bundles. Since for every subbundle $E'\,\subset\, E$ we have
$$\epsilon(E',\,\alpha)\,=\,\epsilon(E',\,G\circ \alpha)$$
this action preserves the (semi)stability condition and it is a well defined
map $\Psi_{[G]}\,:\,\SF\,\longrightarrow \,\SF$, giving rise to an effective action
$\PGL_r(\CC)\times \SF\,\longrightarrow\, \SF$ (see \cite[Lemma 2.6]{BGM10} for more details).
The forgetful map $\SF^{\op{ss-vb}}\,\longrightarrow\, \SM$ is evidently $\PGL_r(\CC)$-invariant.

We shall prove that this action is essentially the unique possible effective action of $\PGL_r(\CC)$ on 
$\SF$ and that the quotient $\SF\gitq \PGL_r(\CC)$ is an extension of the forgetful map. Moreover, 
we shall prove that every isomorphism between moduli spaces of framed bundles $\SF\,\longrightarrow\,\SF'$ can 
be factored as a composition of a map of the form $\Psi_{[G]}$ with a $\PGL_r(\CC)$-equivariant 
isomorphism.

\begin{lemma}
\label{lemma:sectionsTangent}
If $g\,\ge\,{\rm max}\{2+\tau,\, 4\}$, then the $\PGL_r(\CC)$-action on $\SF$
induces an isomorphism $H^0(\SF,\, T_\SF)) \,\cong\, \mathfrak{pgl}_r(\CC)$.
\end{lemma}

\begin{proof}
We will proceed following the ideas of \cite[Lemma 2.7, Lemma 2.8 and Corollary 2.9]{BGM10}. Let 
$\SU$ be the open subset of $\SM^s$ given by Lemma \ref{lemma:genericSubset}. Let $f_\SU\,:\,\SF_\SU\,\longrightarrow\, 
\SU$ be the restriction of the forgetful map, and let $T_{f_\SU}\subset T_{\SF_\SU}$ be the relative 
tangent sheaf for the map $f_\SU$, i.e., it is the kernel of the differential
$$df_\SU\,:\,T_{\SF_\SU} 
\,\longrightarrow\, f_\SU^*T_{\SU}\,.$$ Observe that the forgetful map $f_\SU\,:\,\SF_\SU\,\longrightarrow\, \SM_\SU$ is 
$\PGL_r(\CC)$-invariant with respect to the previous action, so it induces a homomorphism of Lie 
algebras
\begin{equation}\label{ea}
a: \mathfrak{pgl}_r(\CC) \longrightarrow H^0(\SF_\SU,\,T_{f_\SU})\,.
\end{equation}

The action of $\PGL_r(\CC)$ on the set $\SF_\SU^0$ of framed bundles with invertible framing is free, so the composition
$$\mathfrak{pgl}_r(\CC) \stackrel{a}{\longrightarrow} H^0(\SF_\SU,\,T_{f_\SU})
\,\longrightarrow \,H^0(\SF_\SU^0,\,T_{f_\SU}|_{\SF_\SU^0})$$
is injective. Therefore, $a$ is injective.

We will prove that the map $a$ in \eqref{ea} is an isomorphism. For that
it suffices to show that $\dim H^0(\SF_\SU,\,T_{f_\SU})\,=\,\dim \mathfrak{pgl}_r(\CC)\,=\,r^2-1$.

Let $$\pi\, :\, \PP_{\SM^s,x}\,\longrightarrow\, \SM^s$$ be the projective bundle over $\SM$ whose
fiber over any $E$ is $\PP(\Hom(E|_x,\,\CC^r))$, and let $\PP_\SU$ be its restriction to $\SU$. By the choice
of $\SU$, we have a commutative diagram
\begin{eqnarray*}
\xymatrix{
\SF_\SU \ar@{^(->}[r] \ar[rd]_{f_\SU} & \PP_\SU \ar[d]^{\pi_\SU} \ar@{^(->}[r] & \PP_{\SM^s,x} \ar[d]^{\pi}\\
&\SU \ar@{^(->}[r] & \SM^s
}
\end{eqnarray*}
Clearly, the $\PGL_r(\CC)$ action on $\SF$ extends to $\PP_\SU$, and the above map $\pi_\SU$ is
$\PGL_r(\CC)$-invariant, so the map
$$a\,:\,\mathfrak{pgl}_r(\CC) \,\longrightarrow \,H^0(\SF_\SU,\,T_{f_\SU})$$ factors through the map $H^0(\PP_\SU,\, T_{\pi_\SU}) \longrightarrow H^0(\SF_\SU,\,T_{f_\SU})$. As the codimension of the complement of $\SF_\SU$ in $\PP_\SU$ is at least $2$ and $\PP_\SU$ is smooth,
$$H^0(\SF_\SU,\,T_{f_\SU})\,\cong\, H^0(\SF_\SU, \,T_{\pi_\SU})\,=\,H^0(\PP_\SU,\,T_{\pi_\SU})\, ,$$
so it is enough to prove that
\begin{equation}\label{ea2}
\dim(H^0(\PP_\SU,\, T_{\pi_\SU}))\,=\,r^2-1\, .
\end{equation}

Once again, the complement of $\SU$ in $\SM^s$ has codimension at least $2$, and the map $\pi$ is equidimensional, so the complement of $\PP_\SU$ in $\PP_{\SM^s,x}$ has codimension at least $2$. As the latter is smooth, we obtain that
$$H^0(\PP_\SU,\, T_{\pi_\SU})\,\cong\, H^0(\PP_\SU,\, T_{\pi}) \,=\, H^0(\PP_{\SM^s,x},\,T_{\pi})\, .$$
By \cite[Lemma 2.7]{BGM10} we have $\dim H^0(\PP_{\SM^s,x},\,T_{\pi})\,=\,r^2-1$. Since \eqref{ea2} is proved,
we conclude that the map $a$ in \eqref{ea} is an isomorphism.

Since the fibers of the map $f_\SU$ are open subsets of codimension at least $2$ of projective spaces, we have
$$H^0(\SF_\SU,\,f_\SU^*T_{\SU})\,=\,H^0(\SU,\,T_\SU)\, .$$
Moreover, as the complement of $\SU$ in $\SM^s$ has codimension at least $2$ and $\SM^s $ is smooth, we have
$$H^0(\SU,\,T_\SU)\,\cong\, H^0(\SU,\,T_{\SM^s})\,=\,H^0(\SM^s,\,T_{\SM^s})\, .$$
Then, the proof of \cite[Theorem 6.2]{Hi2} implies that $H^0(\SM^s,\,T_{\SM^s})\,=\,0$ (the proof was originally stated in
rank two, but works in any rank. In the coprime case this was first proven in \cite{NR75}). On the other hand, we have the short exact sequence
$$0\,\longrightarrow \,H^0(\SF_\SU,\,T_{f_\SU}) \,\longrightarrow \,H^0(\SF_\SU,\,T_{\SF_\SU})
\,\longrightarrow\, H^0(\SF_\SU,\,f_\SU^*T_\SU)\,=\,0\, ,$$
so $\mathfrak{pgl}_r(\CC)\,\cong\, H^0(\SF_\SU,\,F_{f_\SU})\,=\,H^0(\SF_\SU,\,T_{\SF_\SU})$. Finally, as the complement of $\SF_\SU$ in $\SF$ has codimension at least $2$ and $\SF$ is smooth, we have
$$H^0(\SF_\SU,\,T_{\SF_\SU})\,\cong\, H^0(\SF_\SU,\,T_\SF)\,=\,H^0(\SF,\,T_\SF)\, .$$
This completes the proof of the lemma.
\end{proof}

\begin{proposition}
\label{prop:uniqueAction}
Let $X$ be a curve of genus $g\,\ge\, {\rm max}\{2+\tau,\, 4\}$. Then the above action $\Psi$
of $\PGL_r(\CC)$ on $\SF$
is the unique effective action of $\PGL_r(\CC)$ on $\SF$ up to a group automorphism of $\PGL_r(\CC)$.
\end{proposition}

\begin{proof}
The proof is completely analogous to that of \cite[Proposition 2.5]{BGM10}. Any effective 
$\PGL_r(\CC)$ action induces an injection $i\,:\,\mathfrak{pgl}_r(\CC)\,\hookrightarrow\, H^0(\SF,\,T_\SF)$. 
By Lemma \ref{lemma:sectionsTangent}, $\dim H^0(\SF,\,T_\SF)\,=\,r^2-1\,=\,\dim \mathfrak{pgl}_r(\CC)$, so 
$i$ is an isomorphism. Let $j:\mathfrak{pgl}_r(\CC) \,\cong\, H^0(\SF,\,T_\SF)$ be the isomorphism of Lie 
algebras induced by a second effective action. Then $i\circ j^{-1}$ is an automorphism of 
$\mathfrak{pgl}_r(\CC)$ which comes from an automorphism of the group $\PGL_r(\CC)$.
\end{proof}

\begin{lemma}\label{lez}
Assume that $g\,\ge\,{\rm max}\{2+\tau,\, 4\}$. Then there is a short exact sequence
$$1\longrightarrow \Pic (\SM) \longrightarrow \Pic(\SF) \longrightarrow \ZZ \longrightarrow 1\, ,$$
where the homomorphism $\Pic (\SM) \longrightarrow \Pic(\SF)$ is the extension of pullback of line bundles from $\SM$,
and the second homomorphism is
the restriction to a generic fiber of the forgetful map $f\,:\,\SF^{\op{ss-vb}}\,\longrightarrow\, \SM$.
\end{lemma}

\begin{proof}
Let $\SU$ be the open subset given by Lemma \ref{lemma:genericSubset}. As the codimension of the complement
of $\SF_\SU$ in $\SF$ is at least $2$, we have
$\Pic(\SF_\SU)=\Pic(\SF)$. For each $E\,\in\, \SF_\SU$, the fiber $f^{-1}(E)$ can be identified with a subset of $\PP(\Hom(E|_x,\CC^r))$ whose complement has codimension at least $2$, so we have a short exact sequence
$$1\longrightarrow \Pic(\SU) \longrightarrow \Pic(\SF_\SU) \longrightarrow \ZZ \longrightarrow 1\, .$$
The codimension of the complement of $\SU$ in $\SM$ is at least $2$ and $\SM$ is normal, so
$$\Pic(\SU)\,=\,\Pic(\SM^s)\,=\,\Pic(\SM)\,\cong\, \ZZ\, .$$ As $\SF$ is smooth and the complement
of $\SF_\SU$ in
$\SF$ is of codimension at least two, we have $\Pic(\SF_\SU)\,=\,\Pic(\SF)$, and the pullback map induces a homomorphism
$$\Pic(\SM)\,\longrightarrow \,\Pic(\SF_\SU)\,=\, \Pic(\SF)\, ,$$
so we obtain the desired short exact sequence.
\end{proof}

\begin{lemma}
\label{lemma:GIT}
Assume that $g\,\ge \,{\rm max}\{2+\tau,\, 4\}$. A point $(E,\,\alpha)\,\in \,
\SF^{\op{ss-vb}}$ is $\PGL_r(\CC)$-semistable with respect to
any linearized polarization if and only if $\alpha$ is an isomorphism.
\end{lemma}

\begin{proof}
The proof is completely analogous to that of \cite[Lemma 3.2]{BGM10} after using Lemma
\ref{lez} instead of \cite[Lemma 3.1]{BGM10}.
\end{proof}

\begin{lemma}\label{lemma:isoFibers}
Let $X$ and $X'$ be smooth curves of genus $g$ and $g'$ respectively, and let $\tau$ and $\tau'$
be positive generic
stability parameters such that $g\,\ge\, {\rm max}\{2+\tau,\, 4\}$ and
$g'\,\ge\, {\rm max}\{2+\tau,\, 4\}$. Let $x\in X$ and $x'\,\in \,X'$ be marked points,
and let $\xi$ and $\xi'$ be line bundles over $X$ and $X'$ respectively. Let $\SF\,=\,\SF(X,x,r,\xi,\tau)$ and
$\SF'\,=\,\SF(X',x',r',\xi',\tau')$, and assume that there is an isomorphism
$$\Psi:\SF\stackrel{\sim}{\longrightarrow} \SF'\, .$$ Let $\SM\,=\,\SM(X,r,\xi)$ and $\SM'
\,=\,\SM(X',r',\xi')$. Then $r=r'$, $g=g'$ and there exist open subsets $\SU\subset \SM^s$ and $\SU'\subset (\SM')^s$ such that
\begin{enumerate}
\item $\op{codim}(\SM\backslash \SU,\SM)\ge 2$ and $\op{codim}(\SM'\backslash \SU',\SM')\ge 2$,
\item $\op{codim}(\SF\backslash \SF_\SU,\SF)\ge 2$ and $\op{codim}(\SF'\backslash \SF'_{\SU'},\SF')\ge 2$, and
\item for each $E\,\in\, \SU$ and each $E'\,\in\, \SU'$
\begin{eqnarray*}
\op{codim}(\PP(\Hom(E|_x,\CC^r))\backslash f^{-1}(E),\PP(\Hom(E|_x,\CC^r)))\ge 2\\
\op{codim}(\PP(\Hom(E'|_{x'},\CC^r))\backslash f^{-1}(E'),\PP(\Hom(E'|_{x'},\CC^r)))\ge 2\, ;
\end{eqnarray*}
\end{enumerate}
moreover, there is an isomorphism $\ST:\SU\stackrel{\sim}{\longrightarrow}\SU'$ such that
the following diagram commutes
\begin{eqnarray*}
\xymatrixrowsep{1pc}
\xymatrix{
\SF \ar[r]^{\Psi} & \SF'\\
\SF_{\SU} \ar[r]^{\Psi_{\SU}} \ar@{^(->}[u] \ar[dd]^f & \SF'_{\SU'} \ar@{^(->}[u] \ar[dd]^{f'}\\
&\\
\SU \ar[r]^{\ST} & \SU'
}
\end{eqnarray*}
\end{lemma}

\begin{proof}
By Lemma \ref{lemma:sectionsTangent}
$$r^2-1\,=\,\dim(H^0(\SF,\,T_\SF))\,=\, \dim H^0(\SF',T_{\SF'})\,=\,(r')^2-1\, ,$$
so $r\,=\,r'$. Moreover
$$(r^2-1)g = \dim(\SF) = \dim(\SF')=((r')^2-1)g'=(r^2-1)g'$$
so $g=g'$. Let us fix once and for all a linearized polarization of the $\PGL_r(\CC)$-action on $\SF$. The pullback
of it by $\Psi^{-1}$ gives a linearized polarization on the $\PGL_r(\CC)$-action on $\SF'$. Taking the GIT quotient with respect to those polarizations we obtain a map $\overline{\ST}:\SF\gitq\PGL_r(\CC) \stackrel{\sim}{\longrightarrow} \SF'\gitq\PGL_r(\CC)$ such that the following diagram commutes
\begin{eqnarray*}
\xymatrixrowsep{1pc}
\xymatrix{
\SF \ar[r]^{\Psi} & \SF'\\
\SF^{\op{ss-GIT}} \ar[r]^{\Psi_{\SU}} \ar@{^(->}[u] \ar@{->>}[dd]^\pi & (\SF')^{\op{ss-GIT}} \ar@{^(->}[u] \ar@{->>}[dd]^{\pi'}\\
&\\
\SF\gitq \PGL_r(\CC) \ar[r]^{\overline{\ST}} & \SF'\gitq \PGL_r(\CC)
}
\end{eqnarray*}
Since $f\,:\,\SF^{\op{ss-vb}}\,\longrightarrow\, \SM$ is $\PGL_r(\CC)$-invariant, the restriction to $\SF^*:=\SF^{\op{ss-vb}}\cap
\SF^{\op{ss-GIT}}$ factors as
$$\SF^* \longrightarrow \SF^*\gitq \PGL_r(\CC)\stackrel{\widetilde{g}}{\longrightarrow} \SM\, .$$
Let us prove that $\widetilde{g}$ is an isomorphism. By Lemma \ref{lemma:GIT}, $\SF^*$ coincides with the set of framed bundles $(E,\alpha)\in \SF$ such that $E$ is semistable and $\alpha$ is an isomorphism. The open subset $\SF^0:=\SF^{\op{s-vb}}\cap \SF^{\op{ss-GIT}}$ is a fibration over $\SM^s$ whose fiber over each $E\in \SM^s$ is isomorphic to $\PP(\op{Iso}(E|_x,\CC^r)$ and $\SM$ is normal, so the restriction of $\widetilde{g}:\SF^0\gitq\PGL_r(\CC)\longrightarrow \SM^s$ is an isomorphism. Moreover, the fiber of $\widetilde{g}$ over a strictly semistable bundle is just one point (c.f. \cite[Proposition 3.3]{BGM10}), so $\widetilde{g}$ is an isomorphism. Similarly, we have an isomorphism $(\SF')^*\gitq \PGL_r(\CC) \stackrel{\widetilde{g}'}{\cong} \SM'$ and we have a commutative diagram
\begin{eqnarray*}
\xymatrixrowsep{1pc}
\xymatrix{
\SF^{\op{ss-vb}} \ar@{^(->}[r] &\SF \ar[r]^{\Psi} & \SF' & \ar@{_(->}[l] (\SF')^{\op{ss-vb}}\\
\SF^* \ar@{^(->}[r] \ar[dd]^f \ar@{^(->}[u] & \SF^{\op{ss-GIT}} \ar[r]^{\Psi_{\SU}} \ar@{^(->}[u] \ar@{->>}[dd]^\pi & (\SF')^{\op{ss-GIT}} \ar@{^(->}[u] \ar@{->>}[dd]^{\pi'} & (\SF')^* \ar@{_(->}[l] \ar[dd]^{f'} \ar@{^(->}[u]\\
&&&\\
\SM \ar@{^(->}[r]^-{\widetilde{g}} & \SF\gitq \PGL_r(\CC) \ar[r]^{\overline{\ST}} & \SF'\gitq \PGL_r(\CC)& \SM' \ar@{_(->}[l]_-{\widetilde{g}'}
}
\end{eqnarray*}
Let $\SU_1\,\subset \,\SM^s$ and $\SU_1'\,\subset\,(\SM')^s$ be the subsets given by Lemma \ref{lemma:genericSubset}. Take $\SU=\SU_1\cap (\overline{\ST}^{-1}\circ \widetilde{g}')(\SU_1')$ and $\SU'=\overline{\ST}(\SU)$. By construction, $\Psi(\SF_\SU)=\SF'_{\SU'}$ and $\SU\cong \SU'$, so it is enough to prove that properties (1), (2) and (3) are satisfied.

As $\SU\,\subset\, \SU_1$ and $\SU'\,\subset\, \SU_1'$ we obtain (3). By Corollary \ref{cor:codimF_U}, (2) follows from (1), 
so it is enough to prove that the complements of $\SU$ and $\SU'$ in $\SM$ and $\SM'$ respectively have codimension 
at least $2$.

Let $\mathcal{S}=\SF\gitq \PGL_r(\CC) \backslash \SM$, $\mathcal{S}'=\SF'\gitq \PGL_r(\CC)$, $\SZ_1=\SM\backslash \SU_1$ and $\SZ_1'=\SM\backslash \SU_1'$. The proof of Lemma \ref{lemma:codimUnstable} also implies that under the
given bounds on genus,
$$\dim(\mathcal{S}) \le \dim(\SM)-2 \ \ \text{ and }\ \ \dim(\mathcal{S}')\le \dim(\SM')-2=\dim(\SM)-2\, .$$
Moreover, we chose $\SU_1$ and $\SU_1'$ through Lemma \ref{lemma:genericSubset} so that
$$\dim(\SZ_1)\le \dim(\SM)-2 \ \ \text{ and }\ \ \dim(\SZ_1')\le \dim(\SM')-2=\dim(\SM)-2\, .$$
By construction
$$(\SF \gitq \PGL_r(\CC))\backslash \SU =\mathcal{S} \cup \SZ_1 \cup \overline{\ST}^{-1}(\mathcal{S}' \cup \SZ_1')\, .$$
As $\overline{\ST}$ is an isomorphism, we know that 
$$\dim(\overline{\ST}^{-1}(\mathcal{S}_1'\cup \SZ_1'))=\dim(\mathcal{S}\cup \SZ_1)\le \dim(\SM)-2$$
so $\op{codim}(\SM\backslash \SU,\SM)\ge 2$. Analogously
$$(\SF' \gitq \PGL_r(\CC))\backslash \SU' \,=\,\mathcal{S}' \cup \SZ_1' \cup \overline{\ST}(\mathcal{S} \cup \SZ_1)$$
so we obtain that $\op{codim}(\SM'\backslash \SU',\SM')\ge 2$.
\end{proof}

\begin{corollary}
\label{cor:isoFibers}
Assume that $g\ge \max\{2+\tau,4\}$ and $g'\ge \max\{2+\tau',4\}$, and let $\Psi:\SF \stackrel{\sim}{\longrightarrow} \SF'$ be an
isomorphism. Then $r=r'$, and there is an isomorphism $\sigma:X' \stackrel{\sim}{\longrightarrow} X$, a line bundle
$L$ over $X$ and a sign $s\in\{\pm 1\}$, such the following diagram commutes
\begin{eqnarray*}
\xymatrixrowsep{1pc}
\xymatrix{
\SF \ar[r]^{\Psi} & \SF'\\
\SF^{\op{ss-vb}} \ar[r]^{\Psi^{\op{ss-vb}}} \ar@{^(->}[u] \ar[dd]^f & (\SF')^{\op{ss-vb}} \ar@{^(->}[u] \ar[dd]^{f'}\\
&\\
\SM \ar[r]^{\ST_{\sigma,L,s}} & \SM'
}
\end{eqnarray*}
Moreover, the equality $\Psi^{\op{ss-vb}}(\SF^0)\,=\,(\SF')^0$ holds.
\end{corollary}

\begin{proof}
Let $\SU$ and $\SU'$ be the open subsets of $\SM$ and $\SM'$ given by Lemma \ref{lemma:isoFibers}. By Theorem \ref{theorem:autoVB}, there exists $\sigma:X' \stackrel{\sim}{\longrightarrow} X$, a line bundle $L$ over $X$ and $s\in \{\pm 1\}$ such that the isomorphism $\ST:\SU \stackrel{\sim}{\longrightarrow} \SU'$
satisfies the condition
$$\ST(E)\,\cong \,\ST_{\sigma,L,s}(E)$$
for every $E\in \SU$.
As in the proof of Lemma \ref{lemma:isoFibers}, let us pick any linearization of the
$\SSL_r(\CC)$ action on $\SF$ and the induced linearization on $\SF'$. Let
$$\overline{\ST}\,:\,\SF\gitq \PGL_r(\CC) \,\stackrel{\sim}{\longrightarrow}\, \SF'\gitq \PGL_r(\CC)$$
be the isomorphism induced by $\Psi$. Let us consider the following composition extending $\ST_{\sigma,L,s}$
\begin{eqnarray*}
\xymatrix{
\SM \ar@{^(->}[r]^-{i_\SM} & \SF\gitq\PGL_r(\CC) \ar[r]^{\overline{\ST}} & \SF'\gitq \PGL_r(\CC)\\
\SU \ar@{^(->}[u] \ar[rr]^{\ST_{\sigma,L,s}} &&\SU' \ar@{^(->}[u]
}
\end{eqnarray*}
On the other hand, observe that $\ST_{\sigma,L,s}$ extends to an isomorphism $\SM\longrightarrow \SM'$, so it gives us another map
$$\SM \stackrel{\ST_{\sigma,L,s}}{\longrightarrow}\SM' \stackrel{i_{\SM'}}{\hookrightarrow} \SF'\gitq \PGL_r(\CC)\, .$$
As $\SM$ is irreducible and $\SU$ is dense, there is at most one possible map $\SM\,\longrightarrow\, \SF'\gitq \PGL_r(\CC)$ extending this morphism $\SU\,\longrightarrow\, \SF'\gitq(\PGL_r(\CC))$, so it must coincide with $\overline{\ST}\circ i_\SM$. Therefore, we have a commutative diagram
\begin{eqnarray*}
\xymatrixrowsep{1pc}
\xymatrix{
\SF \ar[r]^{\Psi} & \SF'\\
\SF^{\op{ss-vb}} \ar[r]^{\Psi^{\op{ss-vb}}} \ar@{^(->}[u] \ar[dd]^f & (\SF')^{\op{ss-vb}} \ar@{^(->}[u] \ar[dd]^{f'}\\
&\\
\SM \ar[r]^{\ST_{\sigma,L,s}} & \SM'
}
\end{eqnarray*}
Finally, observe that $\ST_{\sigma,L,s}(\SM^s)=(\SM')^s$, so $\Psi^{\op{ss-vb}}(\SF_{\SM^s})\,=\,\SF'_{(\SM')^s}$.
Moreover, the map $\Psi$ preserves GIT stability, so $\Psi^{\op{ss-vb}}$ must send the set of GIT-semistable
points in $\SF_{\SM^s}$ to the set of GIT-semistable points in $\SF'_{(\SM')^s}$. Therefore,
$$\Psi^{\op{ss-vb}}(\SF^0)\,=\,\Psi^{\op{ss-vb}}(\SF_{\SM^s}\cap \SF^{\op{ss-GIT}}) \,= \,
\SF'_{(\SM')^s}\cap (\SF')^{\op{ss-GIT}}\,=\,(\SF')^0\, .$$
This completes the proof.
\end{proof}

Corollary \ref{cor:isoFibers} shows that we can recover the isomorphism class of the curve from the isomorphism 
class of the moduli space of framed bundles. Let us prove that we can moreover recover the base point $x\,\in \,X$.

Let $\PP\,=\,\PP_{\SM^s,x}$ be the projective bundle over $\SM^s\,=
\,\SM^s(X,r,\xi)$ whose fiber over a stable vector bundle 
$E$ is $\PP(\Hom(E_x,\,\CC^r))$. Even if $\SM^s$ does not admit a universal vector 
bundle, the existence of the bundle $\PP$ is ensured by \cite[Lemma 2.2]{BGM13}. The fiber 
of its dual bundle $\PP^\vee$ over a bundle $E$ is canonically isomorphic to 
$\PP(\Hom(\CC^r,\,E_x))$.

\begin{lemma}
\label{lemma:isoProjective}
Assume that $g\,\ge\, \max\{2+\tau,4\}$ and $g'\ge \max\{2+\tau',\, 4\}$, and let
$\Psi\,:\,\SF \,\stackrel{\sim}{\longrightarrow}\, \SF'$ be an isomorphism. Let $\ST_{\sigma,L,s}:\SM \stackrel{\sim}{\longrightarrow} \SM'$ be the induced isomorphism given by Corollary \ref{cor:isoFibers}. Then $\Psi$ induces an isomorphism $\Psi_\PP:\op{Tot}(\PP_{\SM^s,x}) \stackrel{\sim}{\longrightarrow} \op{Tot}(\PP_{(\SM')^s,x'})$ such that the following diagram commutes
\begin{eqnarray*}
\xymatrixrowsep{1pc}
\xymatrix{
\op{Tot}(\PP_{\SM^s,x}) \ar[r]^-{\Psi_\PP} \ar[dd]^f & \op{Tot}(\PP_{(\SM')^s,x'}) \ar[dd]^{f'}\\
&\\
\SM^s \ar[r]^{\ST_{\sigma,L,s}} & (\SM')^s
}
\end{eqnarray*}
\end{lemma}

\begin{proof}
Let $\SU\,\subset \,\SM^s$ and $\SU'\subset (\SM')^s$ be the open subsets given by Lemma \ref{lemma:isoFibers}. Then $\Psi$ induces an isomorphism $\Psi_\SU:\SF_\SU \stackrel{\sim}{\longrightarrow} \SF_{\SU'}$ over the map $\ST_{\sigma,L,s} : \SU \stackrel{\sim}{\longrightarrow} \SU'$. Moreover, we know that $\SF_\SU$ and $\SF_{\SU'}$ can be identified with subsets of $\PP_{\SM^s}|_\SU$ and $\PP_{(\SM')^s}|_{\SU'}$. Let us prove that the map extends to an isomorphism $\overline{\Psi_\SU}: \PP_{\SM^s}|_\SU \stackrel{\sim}{\longrightarrow} \PP_{(\SM')^s}|_{\SU'}$.

Let $\{\SU_i\}_{i\in I}$ be a covering of $\SU$ by analytic open subsets
such that $\PP_{\SM^s,x}$ is trivial over $\SU_i$ and $\PP_{(\SM')^s,x'}$ is trivial over $\SU_i':=\ST_{\sigma,L,s}(\SU_i)$. Fix trivializations
$$\omega_i\,:\,\PP_{\SM^s,x}|_{\SU_i} \,\longrightarrow\, \SU_i\times \PP^{r^2-1} \ \ \text{ and }\ \
\omega'_i\,:\,\PP_{(\SM')^s,x'}|_{\SU'_i} \,\longrightarrow\, \SU_i' \times \PP^{r^2-1}\, .$$ Then for each $i\in I$ we have an isomorphism
\begin{eqnarray*}
\xymatrixrowsep{1pc}
\xymatrix{
\SU_i\times \PP^{r^2-1} & \SU_i'\times \PP^{r^2-1}\\
\omega_i(\SF_{\SU_i}) \ar[r]^-{\Psi_i} \ar@^{^(->}[u] \ar[dd]^f & \omega_i'(\SF'_{\SU'_i}) \ar@^{^(->}[u] \ar[dd]^{f'}\\
&\\
\SU_i \ar[r]^{\ST_{\sigma,L,s}} & \SU'_i
}
\end{eqnarray*}
For each $E\,\in \,\SU$, the codimension of the complement of $f^{-1}(E)$ in $\PP(\Hom(E|_x,\CC^r))$ is at least $2$, so the complement of each fiber of $\omega_i(\SF_{\SU_i})$ in $\PP^{r^2-1}$ is at least $2$ and, therefore, by Hartogs'
theorem, the composition map
$$\omega_i(\SF_{\SU_i}) \stackrel{\Psi_i}{\longrightarrow} \omega_i'(\SF'_{\SU'_i})
\hookrightarrow \SU_i'\times \PP^{r^2-1} \,\longrightarrow\, \PP^{r^2-1}$$
extends uniquely to a map $\SU_i\times \PP^{r^2-1}\,\longrightarrow\, \PP^{r^2-1}$ and, hence, $\Psi_i$ extends to a morphism
$\overline{\Psi_i}\,:\,\SU_i\times \PP^{r^2-1}\,\longrightarrow\, \SU_i'\times \PP^{r^2-1}$. As the inverse also extends
and their composition is the identity on a dense subset, it follows that $\overline{\Psi_i}$ is an isomorphism. By uniqueness of such extension, the extended maps $\overline{\Psi_i}$ agree on the intersections $\SU_i\cap \SU_j$, so patching them all together they define the desired isomorphism $\overline{\Psi_\SU}: \PP_{\SM^s,x}|_\SU \stackrel{\sim}{\longrightarrow} \PP_{(\SM')^s,x'}|_{\SU'}$.

We have that $\ST_{\sigma,L,s}^*\PP_{\SM^s,x}|_\SU \cong \PP_{(\SM')^s,x'}|_{\SU'}$ and $\SU$ is an open subset
of $\SM^s$ of codimension $2$. As $\SM^s$ is smooth, there is at most one possible extension of
the bundle to $\SM^s$, so $\ST_{\sigma,L,s}^*\PP_{(\SM')^s,x'} \cong \PP_{\SM^s,x}$.
\end{proof}

\begin{theorem}
\label{theorem:Torelli}
Let $X$ and $X'$ be curves of genus $g$ and $g'$ respectively, and let $\tau$ and $\tau'$ be positive generic
stability parameters such that $g\ge \max\{2+\tau,4\}$ and $g'\ge \max\{2+\tau',4\}$. Let $x\in X$ and $x'\in X'$ be marked points, and let
$\xi$ and $\xi'$ be line bundles over $X$ and $X'$ respectively. Let $\SF=\SF(X,x,r,\xi,\tau)$ and
$\SF'=\SF(X',x',r',\xi',\tau')$, and assume that there is an isomorphism
$\Psi:\SF\stackrel{\sim}{\longrightarrow} \SF'$. Then $r=r'$, and there exists an isomorphism
$\sigma:X\stackrel{\sim}{\longrightarrow} X'$ such that $\sigma(x)=x'$.
\end{theorem}

\begin{proof}
By Corollary \ref{cor:isoFibers} there exist an isomorphism $\sigma:X'\stackrel{\sim}{\longrightarrow} X$, a line bundle $L$ over $X$ and $s\in \{\pm 1\}$ such that the following diagram is commutative
\begin{eqnarray*}
\xymatrixrowsep{1pc}
\xymatrix{
\SF \ar[r]^{\Psi} & \SF'\\
\SF^{\op{ss-vb}} \ar[r]^{\Psi^{\op{ss-vb}}} \ar@{^(->}[u] \ar[dd]^f & (\SF')^{\op{ss-vb}} \ar@{^(->}[u] \ar[dd]^{f'}\\
&\\
\SM \ar[r]^{\ST_{\sigma,L,s}} & \SM'
}
\end{eqnarray*}
Moreover, by Lemma \ref{lemma:isoProjective}, $\Psi$ induces an isomorphism $\PP_{\SM^s,x} \cong \ST_{\sigma,L,s}^* \PP_{(\SM')^s,x'}$. By \cite[Corollary 4.2]{BGM10}, this implies that $\sigma(x')=x$.
\end{proof}

\begin{remark}
{\rm Theorem \ref{theorem:Torelli} can be extended to $2$-birational transformations between moduli spaces of
framed bundles. More 
precisely, the result holds if we substitute the isomorphism $\Psi\,:\,\SF\,\longrightarrow\, \SF'$ by an isomorphism 
$\Psi\,:\,\widetilde{\SU}\,\longrightarrow\, \widetilde{\SU}'$ where $\widetilde{\SU}$ and $\widetilde{\SU}'$ are open subsets of $\SF$
and $\SF'$ 
respectively whose complements have codimension at least $2$ in the moduli space.}
\end{remark}

\section{Automorphism group}
\label{section:auto}

Apart from the $\PGL_r(\CC)$-action described in the previous section, we can perform the following transformations on (families of) framed
bundles $(E,\,\alpha)$ which preserve the $\tau$-stability condition:
\begin{enumerate}
\item Given an isomorphism $\sigma:X'\longrightarrow X$ such that $\sigma(x')=x$,
$$(E,\,\alpha)\,\longmapsto \,\left (\sigma^*E, \, \sigma^*\alpha \right)\, .$$
\item Given a line bundle $L$ over $X$, fix a trivialization $\alpha_L:L_x \stackrel{\sim}{\longrightarrow} \CC$. Then send
$$(E,\,\alpha)\,\longmapsto \,\left (E\otimes L,\,\alpha\cdot \alpha_L \right)$$
Since two trivializations $\alpha_L$ and $\alpha_L'$ differ only by a scalar constant, this
map is well defined and furthermore it is independent of the choice of the trivialization $\alpha_L$.
\end{enumerate}
Note that taking the pullback by $\sigma$ and tensoring with $L$ both change the
determinant of the resulting framed bundle. Therefore, in general these transformations do not
induce an automorphism of the moduli space $\SF$, but rather an isomorphism
between two (possibly different) moduli spaces of framed bundles. Given $\sigma$ and $L$ we define the map $\overline{\ST_{\sigma,L,+}}:\SF\longrightarrow \SF'$ as the one that sends
\begin{eqnarray}
\label{e1}
\xymatrixrowsep{0.05pc}
\xymatrixcolsep{0.3pc}
\xymatrix{
\overline{\ST_{\sigma,L,+}} & : & \SF(X,x,r,\xi,\tau) \ar[rrrr] &&&& \SF(X',x',r,\sigma^*(\xi\otimes L),\tau)\\
&& (E,\alpha) \ar@{|->}[rrrr] &&&& \left (\sigma^*(E\otimes L), \sigma^*(\alpha\cdot \alpha_L \right))
}
\end{eqnarray}

We will prove that this type of transformations, together with the $\PGL_r(\CC)$-action generate all possible nontrivial isomorphisms between the moduli spaces of framed bundles.

\begin{lemma}\label{lemma:lemlast}
If $r\,>\,2$, then the two projective bundles $\PP$ and $\PP^\vee$ are not isomorphic.
\end{lemma}

\begin{proof}
We shall break the proof up into several cases because this can be seen from different points
of view.

First assume that $r$ and $\text{degree}(\xi)$ are
coprime. Then there is a Poincar\'e vector bundle over $X\times\SM^s$. Let
$$
W\, \longrightarrow\, \{x\}\times \SM^s \,=\, \SM^s
$$
be the restriction of such a
Poincar\'e bundle to $\{x\}\times \SM^s\, \subset\, X\times\SM^s$. Note that
\begin{equation}\label{e2}
\PP^\vee\,=\, {\mathbb P}(W^{\oplus r})\ \ \text{ and }\ \
\PP\,=\, {\mathbb P}((W^\vee)^{\oplus r})\, .
\end{equation}
Assume that the projective bundles $\PP^\vee$ and $\PP$ are isomorphic. Consequently,
from \eqref{e2} it follows that there
is a line bundle $L_0$ on $\SM^s$ such that
\begin{equation}\label{e3}
(W^\vee)^{\oplus r}\,=\, W^{\oplus r}\otimes L_0\, .
\end{equation}

If $A$ and $B$ are two vector bundles on $\SM^s$ such that $A^{\oplus r}$ is isomorphic
to $B^{\oplus r}$, then $A$ is isomorphic to $B$ \cite[p.~315, Theorem~2]{At}. Therefore,
from \eqref{e3} it follows that $W^\vee$ is isomorphic to $W\otimes L_0$. Hence
the line bundle $\bigwedge^r W^\vee$ is isomorphic to $\bigwedge^r (W\otimes L_0)\,=\,
L^{\otimes r}_0\otimes \bigwedge^r W$. The Picard group of $\SM^s$ is identified with $\mathbb Z$
by sending its ample generator to $1$ \cite{Ra}; let $\ell\, \in\mathbb Z$ be the image of
$\bigwedge^r W$ by this identification of $\text{Pic}(\SM^s)$ with
$\mathbb Z$. We have
\begin{equation}\label{f1}
\text{degree}(\xi)\cdot \ell\,=\, 1 + ar
\end{equation}
for some integer $a$ \cite[p.~75, Remark~2.9]{Ra} (see also \cite[p.~75, Definition~2.10]{Ra}).
Since $\bigwedge^r W^\vee\,=\,L^{\otimes r}_0\otimes \bigwedge^r W$, we also have
\begin{equation}\label{f2}
-\ell\,=\, br+\ell\, ,
\end{equation}
where $b\,\in\, \mathbb Z$ is the image of $L_0$. From \eqref{f1} and \eqref{f2} it
follows that
$$
2\text{degree}(\xi)\cdot \ell\,=\, -\text{degree}(\xi) br\,=\, 2+2ar\, .
$$
This implies that $r\,=\,2$.

Now assume that $r$ and $\text{degree}(\xi)$ have a common factor. Let $$\delta\,=\, \text{g.c.d.}(r,
\, \text{degree}(\xi))\, >\, 1$$ be the greatest common divisor. The Brauer group 
$\text{Br}(\SM^s)$ of $\SM^s$ is the cyclic group ${\mathbb Z}/\delta\mathbb Z$, and
it is generated by the class of the restriction to
$\{x\}\times \SM^s$ of the projectivized Poincar\'e bundle \cite[p.~267, Theorem~1.8]{BBGN}; we shall
denote this generator of $\text{Br}(\SM^s)$ by $\varphi_0$. Now, the class of
$\PP^\vee$ is $\varphi_0$ (tensoring by a vector bundle does not change the Brauer class),
and hence the class of $\PP$ is $-\varphi_0$. If $\PP^\vee$ is isomorphic to $\PP$, then
we have $\varphi_0\,=\, -\varphi_0$, hence $\delta\,=\, 2$ (as it is the order of
$\varphi_0$).

We now assume that $\delta\,=\, 2$. For a suitable ${\mathbb P}^{r-1}_{\mathbb C}$ embedded in $\SM^s$, the restriction of
$\PP^\vee$ to it is the projectivization of the vector
bundle ${\mathcal O}_{{\mathbb P}^{r-1}_{\mathbb C}}\oplus
\Omega^1_{{\mathbb P}^{r-1}_{\mathbb C}}$ \cite[p.~464, Lemma~3.1]{BBN09},
\cite[p.~464, (3.4)]{BBN09}; note that any extension of $\Omega^1_{{\mathbb P}^{r-1}_{\mathbb C}}$
by ${\mathcal O}_{{\mathbb P}^{r-1}_{\mathbb C}}$ splits because
$H^1({\mathbb P}^{r-1}_{\mathbb C},\, T{\mathbb P}^{r-1}_{\mathbb C})\,=\, 0$.
Therefore, if $\PP$ and $\PP^\vee$ are isomorphic,
restricting an isomorphism to this embedded ${\mathbb P}^{r-1}_{\mathbb C}$ it follows
that ${\mathcal O}_{{\mathbb P}^{r-1}_{\mathbb C}}\oplus
\Omega^1_{{\mathbb P}^{r-1}_{\mathbb C}}$ is isomorphic
to $({\mathcal O}_{{\mathbb P}^{r-1}_{\mathbb C}}\oplus T{\mathbb P}^{r-1}_{\mathbb C})
\otimes L'$ for some line bundle $L'$ on ${\mathbb P}^{r-1}_{\mathbb C}$. Since
$T{\mathbb P}^{r-1}_{\mathbb C}$ is indecomposable, in fact it is stable,
from \cite[p.~315, Theorem~2]{At} it follows that $T{\mathbb P}^{r-1}_{\mathbb C}
\otimes L'$ is isomorphic to either ${\mathcal O}_{{\mathbb P}^{r-1}_{\mathbb C}}$ or
$\Omega^1_{{\mathbb P}^{r-1}_{\mathbb C}}$. If
$T{\mathbb P}^{r-1}_{\mathbb C}
\otimes L'$ is isomorphic to ${\mathcal O}_{{\mathbb P}^{r-1}_{\mathbb C}}$,
then we have $r\,=\, 2$. If $T{\mathbb P}^{r-1}_{\mathbb C}
\otimes L'$ is isomorphic to $\Omega^1_{{\mathbb P}^{r-1}_{\mathbb C}}$, we have
$$
r+ (r-1)\cdot \text{degree}(L')\,=\, -r \, ,
$$
so we obtain
$$
-(r-1)\cdot \text{degree}(L')\,=\, 2r \, .
$$
Then we conclude that $r-1$ divides $2$, which implies that either $r\,=\, 2$ or $r\,=\, 3$. However, $r$ is even because $\delta \,=\, 2$, so $r\,=\, 2$.
\end{proof}

\begin{lemma}
\label{lemma:noDual}
Under the conditions of the Lemma \ref{lemma:isoFibers}, there exists an isomorphism $$\sigma\,:\,X'\,
\stackrel{\sim}{\longrightarrow}\, X$$ and a line bundle $L$ over $X$ such that 
$$\ST(E)\cong \ST_{\sigma,L,+}(E)$$
for every $E\in \SM$.
\end{lemma}

\begin{proof}
For $r\,=\,2$, this is a direct consequence of Lemma \ref{lemma:trivialrk2}.

Assume that $r\,>\,2$. To prove by contradiction,
suppose that there exist $\sigma: X'\stackrel{\sim}{\longrightarrow} X$ and $L$ such that the induced isomorphism between $\SM$ and $\SM'$ is $\ST_{\sigma,L,-}$. Let $L'=\sigma^* L$. Then clearly $\ST_{\sigma,L,-}^{-1}=\ST_{\sigma^{-1},L',-}$. Fix a trivialization $\alpha_{L}:L_x \stackrel{\sim}{\longrightarrow} \CC$ and consider the map
\begin{eqnarray*}
\xymatrixrowsep{0.05pc}
\xymatrixcolsep{0.3pc}
\xymatrix{
{\wt{\ST_{\sigma^{-1},L',-}}}&:&\op{Tot}(\PP_{(\SM')^s}) \ar[rrrr]^{\sim} &&&& \op{Tot}(\PP^\vee_{\SM^s})\\
&& (E,\alpha) \ar@{|->}[rrrr] &&&& \left( (\sigma^{-1})^*(E\otimes L')^\vee, (\sigma^{-1})^*(\alpha^t \otimes \alpha_L^t) \right).
}
\end{eqnarray*}
The following diagram is commutative by construction
\begin{eqnarray*}
\xymatrixcolsep{4pc}
\xymatrix{
\op{Tot}(\PP|_{(\SM')^s}) \ar[r]^{\wt{\ST_{\sigma^{-1},L',-}}} \ar[d] & \op{Tot}(\PP^\vee|_{\SM^s}) \ar[d] \\
(\SM')^s \ar[r]^{\ST_{\sigma^{-1},L',-}} & \SM^s
}
\end{eqnarray*}
On the other hand, by Lemma \ref{lemma:isoProjective}, there exists an isomorphism 
\begin{eqnarray*}
\xymatrixrowsep{1pc}
\xymatrix{
\op{Tot}(\PP_{\SM^s}) \ar[r]^-{\Psi_\PP} \ar[dd]^f & \op{Tot}(\PP_{(\SM')^s}) \ar[dd]^{f'}\\
&\\
\SM^s \ar[r]^{\ST_{\sigma,L,-}} & (\SM')^s
}
\end{eqnarray*}
Therefore, composing both we obtain an isomorphism
$${\wt{\ST_{\sigma^{-1},L',-}}}\circ \Psi_\PP:
\op{Tot}(\PP_{\SM^s}) \stackrel{\sim}{\longrightarrow} \op{Tot}(\PP^\vee_{\SM^s})$$ commuting with the
respective projections to $\SM^s$, thus contradicting Lemma \ref{lemma:lemlast}. This completes the proof.
\end{proof}

\begin{lemma}
\label{lemma:extendinversetranspose}
Take $r\,>\,2$, and consider the algebraic automorphism
\begin{eqnarray*}
\xymatrixrowsep{0.05pc}
\xymatrixcolsep{0.3pc}
\xymatrix{
\SD&:&\PGL_r(\CC) \ar[rrrr] &&&& \PGL_r(\CC)\\
&& [G] \ar@{|->}[rrrr] &&&& [(G^{-1})^t].
}
\end{eqnarray*}
Then there does not exist any algebraic automorphism
$$\overline{\SD}\,:\,\PP(\Mat_r(\CC))\,\longrightarrow \,\PP(\Mat_r(\CC))$$ extending $\SD$.
\end{lemma}

\begin{proof}
As $\PGL_r(\CC)$ is dense in $\PP(\Mat_r(\CC))$ and the latter is irreducible, there exists at most one extension of $\SD$ to $\PP(\Mat_r(\CC))$. Let $\SU\,\subsetneq\,
\PP(\Mat_r(\CC))$ be the open subset corresponding to matrices with at least an $(r-1)\times (r-1)$ minor with nonzero determinant. Let $\op{cof}$ be the morphism that sends each matrix $[G]\in \SU$ to its cofactor matrix
$$\op{cof}(G)\,=\,\wedge^{r-1}(G)\, .$$
The entries of the cofactor matrix are determinants of minors of $G$, so they are given
by homogeneous polynomials of degree $r-1$ in the entries of $G$ and, therefore, $\op{cof}$ induces an algebraic map
$$\op{cof}:\SU \longrightarrow \PP(\Mat_r(\CC))\, .$$
Given an invertible matrix $[G]\in \PGL_r(\CC)$, we have that
$$(G^{-1})^t \,=\, \frac{1}{\det(G)}\op{cof}(G)\, .$$
Therefore, $[(G^{-1})^t]=[\op{cof}(G)]$ for every $[G]\in \PGL_r(\CC)$ and $\op{cof}$ is the unique possible extension of $\SD$ to $\SU$. Nevertheless, for $r>2$ this map is not injective. For example, for every $\lambda\in \CC$, let
$$G_\lambda=\left(\begin{array}{cc|c|c}
1 & 0 & 0 & 0\\
\lambda & 1 & 0 & 0\\
\hline
0 & 0 & \id_{r-3} & 0\\
\hline
0 & 0 & 0 & 0
\end{array}\right)$$
Clearly, if $\lambda_1\ne \lambda_2$ then $[G_{\lambda_1}]\ne [G_{\lambda_2}]$ in $\PP(\Mat_r(\CC))$. However, for every $\lambda\in \CC$
$$\op{cof}(G_\lambda)=\left( \begin{array}{c|c}
0_{r-1} & 0\\
\hline
0 & 1
\end{array} \right)$$
So, in particular, $[G_\lambda]\in \SU$ for every $\lambda\in \CC$, which proves that $\SD$ cannot be extended to an injective map on $\SU$.
\end{proof}

\begin{lemma}
\label{lemma:autoEquivariant}
Let $\Psi\,:\,\SF\,\longrightarrow\, \SF'$ be an isomorphism of moduli spaces of framed bundles. Then there exists $[G]
\,\in\, \PGL_r(\CC)$ such that $\Psi_{[G]}\circ \Psi$ is a $\PGL_r(\CC)$-equivariant isomorphism, where $\Psi_{[G]} \, :\, \SF'\, \longrightarrow \, \SF'$ is the automorphism of $\SF'$ induced by composing the framing with the $\PGL_r(\CC)$ action on $\CC^r$.
\end{lemma}

\begin{proof}
Let $\gamma:\PGL_r(\CC) \times \SF\longrightarrow \SF$ and $\gamma'\,:\,\PGL_r(\CC) \times \SF'\,\longrightarrow\, \SF'$ be the natural actions of $\PGL_r(\CC)$ on
$\SF$ and $\SF'$ respectively described before. If $\Psi$ is an isomorphism, it induces another action
$$\gamma'':\PGL_r(\CC)\times \SF\longrightarrow \SF$$ given by
$$\gamma''([X],(E,\alpha))\,=\,\Psi^{-1}(\gamma'([X],\Psi(E,\alpha)))\, .$$
By Proposition \ref{prop:uniqueAction}, there exists a unique action of $\PGL_r(\CC)$ on $\SF$ up to a group automorphism of $\PGL_r(\CC)$. For $r=2$, all the automorphisms of $\PGL_2(\CC)$ are
inner and for $r>2$, the only outer automorphism of $\PGL_r(\CC)$ is the inverse-transpose, i.e., the map
$[X] \longmapsto [(X^{-1})^t]$. Therefore, there exists a matrix $[G]\in\PGL_r(\CC)$ such that either
$$\gamma([X],(E,\alpha))\,=\,\gamma''([G^{-1}XG],\,(E,\,\alpha))
\,=\,(\Psi_{[G]}\circ \Psi)^{-1}(\gamma'([X],\,(\Psi_{[G]}\circ \Psi)(E,\,\alpha)))$$
or 
\begin{multline*}
\gamma([X],\,(E,\,\alpha))\,=\,\gamma''([G^{-1}(X^{-1})^tG],\,(E,\,\alpha))\\
=\,(\Psi_{[G]}\circ \Psi)^{-1}(\gamma'([(X^{-1})^t],\,(\Psi_{[G]}\circ \Psi)(E,\,\alpha)))
\end{multline*}
and it is only necessary to consider the latter when $r>2$. In the first case, as
$\Psi_{[G]}$ is an automorphism of $\SF'$, it
follows that $\Psi_{[G]}\circ \Psi$ is a $\PGL_r(\CC)$-equivariant isomorphism. Let
us prove that the second case is impossible if $r > 2$. Let $\SU$ and $\SU'$ be the open subsets given by Lemma \ref{lemma:isoFibers} and let $\ST:\SU \longrightarrow \SU'$ be the isomorphism induced by $\Psi$.
Take $E\in \SU$, and let $E'=\ST(E)$. Then $\Psi_{[G]}\circ \Psi$ induces an algebraic isomorphism
$$(\Psi_{[G]}\circ \Psi)|_{f^{-1}(E)} \,:\, f^{-1}(E)\,\longrightarrow\, f^{-1}(E')\, .$$
By construction of $\SU$ and $\SU'$ we know that
\begin{eqnarray*}
\op{codim}(\PP(\Hom(E|_x,\CC^r))\backslash f^{-1}(E), \PP(\Hom(E|_x,\CC^r)))\ge 2\\
\op{codim}(\PP(\Hom(E'|_{x'},\CC^r))\backslash f^{-1}(E'), \PP(\Hom(E'|_{x'},\CC^r)))\ge 2
\end{eqnarray*}
so, by and Hartogs' theorem, the map $(\Psi_{[G]}\circ \Psi)|_{f^{-1}(E)}$ extends uniquely to an isomorphism
$$\overline{(\Psi_{[G]}\circ \Psi)|_{f^{-1}(E)}} \,:\, \PP(\Hom(E|_x,\CC^r))
\,\longrightarrow\, \PP(\Hom(E'|_{x'},\CC^r))\, .$$
Fix any trivialization $\alpha:E|_x \stackrel{\sim}{\longrightarrow} \CC^r$
of $E|_x$. As $\alpha$ is an isomorphism, we have $(E,\alpha)\,\in\, f^{-1}(E)$. Let $\alpha'\,=\,(\Psi_{[G]}\circ \Psi)|_{f^{-1}(E)}(\alpha)$. By Lemma
\ref{lemma:GIT}, the composition $\Psi_{[G]}\circ \Psi$ sends $\SF^0$ to $(\SF')^0$, so
$\alpha'$ is an isomorphism. Using the trivializations $\alpha$ and $\alpha'$, we get isomorphisms
$$\PP(\Hom(E|_x,\CC^r)) \,\stackrel{\alpha}{\cong}\, \PP(\Mat_r(\CC))
\,\stackrel{\alpha'}{\cong}\, \PP(\Hom(E'|_{x'},\CC^r))\, ;$$
thus $\overline{(\Psi_{[G]}\circ \Psi)|_{f^{-1}(E)}}$ induces an algebraic isomorphism
$$\widetilde{\Psi}\,:\,\PP(\Mat_r(\CC)) \,\longrightarrow\, \PP(\Mat_r(\CC))\, .$$
Moreover, for every $[X]\,\in\, \PGL_r(\CC)$ we have
$$(\Psi_{[G]}\circ \Psi)|_{f^{-1}(E)}(X\circ \alpha)
\,=\, (X^{-1})^t\circ \alpha'\, ,$$ so $\widetilde{\Psi}([X])\,=\,[X^{-1}]^t$
for every $[X]\in \PGL_r(\CC)$, and therefore, $\widetilde{\Psi}$ extends the inverse-transpose map to an
automorphism of $\PP(\Mat_r(\CC))$, thus contradicting Lemma \ref{lemma:extendinversetranspose}.
\end{proof}

\begin{lemma}\label{lel}
Let $\Psi^0\,:\,\SF^0\,\longrightarrow\, \SF^0$ be a $\PGL_r(\CC)$-equivariant
automorphism of $\SF^0$ commuting with the forgetful map $f^0\,:\,\SF^0 \,\longrightarrow
\,\SM^s$. Then $\Psi^0$ is the identity map.
\end{lemma}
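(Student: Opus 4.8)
The plan is to analyze $\varphi^0$ fibrewise over $\SM^s$ and show that it restricts to the identity on every fibre. Since $\varphi^0$ commutes with $f^0$, for each stable bundle $E$ it induces an automorphism of the fibre $(f^0)^{-1}(E)\,=\,\PP(\op{Isom}(E_x,\CC^r))$ which is equivariant for the $\PGL_r(\CC)$-action by post-composition; as recalled above, this action is free and transitive on that fibre, so the fibre is a $\PGL_r(\CC)$-torsor and an equivariant automorphism of it is necessarily a ``right translation''. Concretely, fixing a framing $\alpha_0$ over $E$ and writing an arbitrary framing as $\alpha=[G]\circ\alpha_0$, equivariance forces $\varphi^0(\alpha)=[G]\circ\varphi^0(\alpha_0)$, whence $\varphi^0(\alpha)=\alpha\circ\beta(E)$ for $\beta(E):=\alpha_0^{-1}\circ\varphi^0(\alpha_0)\in\PGL(E_x)$ independent of $\alpha$. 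By algebraicity of $\varphi^0$, the assignment $E\mapsto\beta(E)$ is a global section of the group scheme $\mathcal{G}\to\SM^s$ of projective-linear automorphisms of the $\PP^{r-1}$-bundle $E\mapsto\PP(E_x)$ over $\SM^s$ (this bundle exists by the same argument that gives $\PP$, \cite[Lemma 2.2]{BGM13}); equivalently $\mathcal{G}$ is the adjoint (gauge) group scheme of the $\PGL_r(\CC)$-torsor $f^0\colon\SF^0\to\SM^s$ and $\beta$ is the gauge transformation inducing $\varphi^0$. Conversely, any section of $\mathcal{G}$ yields such a $\varphi^0$, so the lemma is equivalent to the assertion that the only global section of $\mathcal{G}\to\SM^s$ is the identity.

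To prove this, I would first treat the case in which $r$ and $\text{degree}(\xi)$ are coprime, so that a Poincar\'e bundle on $X\times\SM^s$ exists. Let $W$ be its restriction to $\{x\}\times\SM^s$, so $\mathcal{G}=\PGL(W)$. The short exact sequence $1\to\mathbb{G}_m\to\GL(W)\to\PGL(W)\to1$ of group schemes on $\SM^s$ yields an exact sequence $\Aut(W)/\CC^*\hookrightarrow\Gamma(\SM^s,\PGL(W))\to\Pic(\SM^s)$, in which the image of a section $\beta$ is the line bundle $L_\beta$ such that $\beta$ lifts to an isomorphism $W\stackrel{\sim}{\to}W\otimes L_\beta$. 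Taking determinants of these rank-$r$ bundles forces $L_\beta^{\otimes r}\cong\SO_{\SM^s}$, and since $\Pic(\SM^s)\cong\mathbb{Z}$ is torsion-free \cite{Ra} we get $L_\beta\cong\SO_{\SM^s}$; hence $\beta$ lifts to a global automorphism of $W$. As $W$ is a simple vector bundle on $\SM^s$ (it is in fact stable, so $\op{End}(W)=H^0(\SM^s,\SO_{\SM^s})=\CC$), one has $\Aut(W)=\CC^*$, the lift is a scalar, and therefore $\beta=\op{Id}$, i.e.\ $\varphi^0=\op{Id}$.

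For arbitrary $r$ and $\text{degree}(\xi)$ a Poincar\'e bundle need not exist, and $\mathcal{G}$ is instead the automorphism group scheme of the projectivized twisted Poincar\'e sheaf. I expect the same conclusion to follow either by working directly with the associated Azumaya algebra $\mathcal{A}$ (with $\PP(E_x)$ its projectivization), whose only global sections are the scalars --- again a simplicity statement for the twisted Poincar\'e sheaf --- or by passing to a finite \'etale cover $T\to\SM^s$ that splits the Brauer obstruction of $\{x\}\times\SM^s$ (compare the use of the Brauer group in the proof of Lemma \ref{lemlast}), applying the coprime argument upstairs to the induced honest bundle $W_T$, and descending the equality $\beta|_T=\op{Id}$ along the faithfully flat map $T\to\SM^s$. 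The main obstacle I anticipate is precisely this general case: handling the absence of a universal bundle and establishing the required simplicity of the (possibly twisted) restricted Poincar\'e sheaf; the rest is formal manipulation of torsors and of the exact sequence relating $\GL$ and $\PGL$.
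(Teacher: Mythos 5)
Your reduction of the lemma to the statement that the adjoint group scheme $\mathcal{G}\to\SM^s$ of the torsor $f^0$ has only the identity global section is correct, and it is exactly the reformulation the paper uses (phrased there as: the principal $\PGL_r(\CC)$-bundle $\SF^0$ has no nontrivial automorphisms, i.e.\ it is simple). Your treatment of the coprime case is also correct and is a genuinely different, more hands-on argument than the paper's: you lift a section of $\PGL(W)$ to an isomorphism $W\stackrel{\sim}{\to}W\otimes L_\beta$, kill $L_\beta$ by the determinant computation together with torsion-freeness of $\Pic(\SM^s)\cong\mathbb{Z}$, and finish with simplicity of $W$. This buys transparency in the coprime case but at the cost of depending on the existence of an honest Poincar\'e bundle.

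The genuine gap is the non-coprime case, which you flag but do not close, and your two proposed fixes are not on equal footing. The \'etale-cover route is problematic: even granting a finite cover $T\to\SM^s$ splitting the Brauer class, the resulting bundle $W_T$ has no reason to remain simple (simplicity and stability are not preserved under pullback along finite covers), so the ``coprime argument upstairs'' does not apply as stated; only the descent of $\beta|_T=\id$ is formal. The Azumaya-algebra route is essentially the right one, but the assertion that the twisted endomorphism algebra has only scalar global sections is precisely the nontrivial input you would still need to prove. The paper closes this uniformly, with no case division: it identifies $\SF^0$ as the principal $\PGL_r(\CC)$-bundle associated to $\SP^\vee|_x$, where $\SP$ is the universal projective bundle on $X\times\SM^s$ (which exists regardless of coprimality, by \cite[Lemma 2.2]{BGM13}), invokes stability of $\SP|_x$ from \cite{BBN09} (hence of its dual), and then cites \cite{BG08} for the fact that a stable principal sheaf is simple. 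That simplicity statement is exactly your ``only global section of $\mathcal{G}$ is the identity''; to complete your proof you should replace the sketched general case by these citations (or reprove the simplicity of the twisted restricted Poincar\'e sheaf, which is the real content you are missing).
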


\begin{proof}
If $\Psi^0$ is $\PGL_r(\CC)$-equivariant, then it is an automorphism of $\SF^0$
considered as a $\PGL_r(\CC)$-principal bundle. Let $\SP$ be the universal projective bundle
over $\SM^s$, i.e., the unique projective bundle over $X\times \SM^s$ whose fiber over each
stable vector bundle $E$ is $\PP(E)$. Let $\{U_\alpha\}$ be a trivializing cover of $\SM^s$ for $\SP|_x$, and let $g_{\alpha\beta}:U_\alpha\cap U_\beta \longrightarrow \PGL_r(\CC)$ be the corresponding transition functions. Observe that $\{U_\alpha\}$ is also a trivializing cover for $\PP$ and, thus, for the $\PGL_r(\CC)$-bundle $\SF^0$. It is straightforward to check that the transition functions for $\SF^0$ as $\PGL_r(\CC)$-bundle are $(g_{\alpha\beta}^{-1})^t$. Therefore, we conclude that $\SF^0$ is the $\PGL_r(\CC)$-principal bundle associated to the dual bundle of $\SP|_x$, i.e., $\SP^\vee|_x$. By \cite{BBN09}, the projective bundle
$\SP|_x$ is stable and, therefore, its dual $\SP^\vee|_x$ must also be stable. Applying the
results from \cite{BG08} we know that
 $\SP^\vee|_x$ is simple and, therefore, $\SF^0$ has no nontrivial automorphism, so $\Psi^0$ must be the identity map.
\end{proof}

\begin{lemma}
\label{lemma:uniqueExtension}
Let $M$ be an irreducible smooth complex scheme, and let $U\subset M$ be an open subset whose complement has
codimension at least $2$. Let $\tau$ be a generic stability parameter, and let $(\SE_1,\,\alpha_1)$ and
$(\SE_2,\,\alpha_2)$ be families of framed bundles over $(X,x)$ parametrized by $M$, where $\SE_i$ are vector
bundles over $X\times M$ and $\alpha_i\,:\,\SE|_{\{x\}\times M}\,\longrightarrow\, \SO_M^r$.

Assume that for each $t\in U$, the fiber $(\SE_{i,t},\alpha_{i,t})$ over $t\in U$ is a $\tau$-stable
framed bundle of rank $r$ such that $\det(\SE_{i,t})\cong \xi$. In particular, they define maps
$$f_1,\, f_2\,:\,U\,\longrightarrow\,\SF(X,x,r,\xi,\tau)\, .$$ If $f_1\,=\,f_2$, then
$(\SE_1,\,\alpha_1)\,\cong\, (\SE_2,\,\alpha_2)$.
\end{lemma}

\begin{proof}
By definition of the moduli functor, as $f_1|_U=f_2|_U$, there is an isomorphism of families of framed bundles
$(\SE_1,\alpha_1)|_{X\times U} \stackrel{\rho}\cong (\SE_2,\alpha_2)|_{X\times U}$. In particular,
$\SE_1|_{X\times U}\stackrel{\rho}{\cong}\SE_2|_{X\times U}$. Moreover, we can choose $\rho$ so that $\alpha_2|_U\circ \rho=\alpha_1|_U$. The codimension of the complement of $U$ in $M$ is at least $2$, so
$$\op{codim}(X\times M \backslash X\times U, X\times M)\ge 2\, .$$
As $X\times M$ is smooth, $\rho$ extends to an isomorphism $\SE_1\stackrel{\overline{\rho}}{\cong} \SE_2$. Under this
identification, $\alpha_1,\alpha_2\circ \overline{\rho}\,\in\, H^0(\Hom(\SE_1|_{\{x\}\times M},\,\SO_M^r))$ are
sections which extend the map
$$\alpha_1|_U\,=\,\alpha_2|_U\circ \rho\,\in \,H^0(\Hom(\SE_1|_{\{x\}\times U},\,\SO_U^r))\, .$$ As
both sections coincide over an open dense subset, it follows that $\alpha_1=\alpha_2 \circ \overline{\rho}$, so
we have $(\SE_1,\alpha_1)\cong (\SE_2,\alpha_2)$.
\end{proof}

\begin{theorem}
\label{thm:mainthm}
Let $X$ and $X'$ be curves of genus $g$ and $g'$ respectively, and let $\tau$ and $\tau'$ be positive generic
stability parameters such that $g\ge \max\{2+\tau,4\}$ and $g'\ge \max\{2+\tau',4\}$. Let $x\in X$ and $x'\in X'$ be marked points, and
let $\xi$ and $\xi'$ be line bundles over $X$ and $X'$ respectively. Let $\SF=\SF(X,x,r,\xi,\tau)$ and
$\SF'=\SF(X',x',r',\xi',\tau')$, and assume that there is an isomorphism $\Psi:\SF\stackrel{\sim}{\longrightarrow}
\SF'$. Then $r=r'$ and there exist
\begin{itemize}
\item an isomorphism $\sigma:X' \stackrel{\sim}{\longrightarrow} X$ with $\sigma(x')\,=\, x$,
\item a degree zero line bundle $L\,\in\, J(X)$ with $\sigma^*(\xi\otimes L^{\otimes r})\,
\cong \,\xi'$, and
\item a matrix $[G]\in \PGL_r(\CC)$,
\end{itemize}
such that if we pick any trivialization $\alpha_L:L_x\stackrel{\sim}{\longrightarrow} \CC$ then
$$\Psi(E,\,\alpha)\,=\, \left(\sigma^*(E\otimes L),\,G\circ \sigma^*(\alpha\cdot \alpha_L) \right)$$
for every $(E,\,\alpha)\,\in\, \SF$. In particular, stability parameters $\tau$ and $\tau'$ satisfy that a framed
bundle is $\tau$-stable if and only if it is $\tau'$-stable (i.e., they belong to the same stability chamber).
\end{theorem}

\begin{proof}
By Lemma \ref{lemma:autoEquivariant}, there exists some $[G]\,\in\, \PGL_r(\CC)$ such that $\Psi'=\Psi_{[G^{-1}]}\circ \Psi$ is a $\PGL_r(\CC)$-equivariant
isomorphism. Applying Corollary \ref{cor:isoFibers} and Theorem \ref{theorem:Torelli}, there must exist
an isomorphism $\sigma:X' \longrightarrow X$ with $\sigma(x')\,=\, x$, and a line bundle $L$ over
$X$ and $s\in \{\pm 1\}$ with $\sigma^*(\xi\otimes L^{\otimes r})^s\,
\cong \,\xi'$, such that the following diagram is commutative
\begin{eqnarray*}
\xymatrix{
\SF^{\op{ss-vb}} \ar[r]^-{\Psi'} \ar[d]_f & (\SF')^{\op{ss-vb}} \ar[d]^{f'}\\
\SM \ar[r]^{\ST_{\sigma,L,s}} & \SM' 
}
\end{eqnarray*}
moreover, by Lemma \ref{lemma:noDual}, we know that we can choose $L$ so that $s=1$. Composing with $\overline{\ST_{\sigma,L,+}}^{-1}\,=\,
\overline{\ST_{\sigma^{-1},\sigma^*L^{-1},+}}$, we obtain a map
$$\Psi''\,=\,\overline{\ST_{\sigma,L,+}}^{-1}\circ \Psi':\SF^{\op{ss-vb}} \longrightarrow \SF^{\op{ss-vb}}$$ commuting with the projection to $\SM$. The map $\overline{\ST_{\sigma,L,+}}$ is $\PGL_r(\CC)$-equivariant by construction, so $\Psi''$ is a $\PGL_r(\CC)$-equivariant automorphism of $\SF^{\op{ss-vb}}$ commuting with the projection to $\SM$. By the second part of Corollary \ref{cor:isoFibers},
the automorphism $\Psi''$ preserves $\SF^0$, so it induces a $\PGL_r(\CC)$-bundle map
\begin{eqnarray*}
\xymatrix{
\SF^{0} \ar[rr]^{\Psi^{0}} \ar[dr]_f && \SF^{0} \ar[dl]^f\\
 & \SM^s &
}
\end{eqnarray*}
Using Lemma \ref{lel} we obtain that $\Psi^0$ is the identity map on $\SF^0$. There exists at 
most one extension of $\Psi^{0}$ to $\SF^{\op{ss-vb}}$, because $\SF^0$ is dense in $\SF^{\op{ss-vb}}$ and the latter 
is irreducible. Since the identity map of $\SF^{\op{ss-vb}}$ is one such extension, it follows that
$\Psi''|_{\SF^{\op{ss-vb}}}\,=\,\id_{\SF^{\op{ss-vb}}}$, so we have $\Psi|_{\SF^{\op{ss-vb}}}\,=\,\Psi_{[G]}\circ \overline{\ST_{\sigma,L,+}}$.

So far we have proved that the restriction of $\Psi$ to $\SF^{\op{ss-vb}}$ coincides with $\Psi_{[G]}\circ
\overline{\ST_{\sigma,L,+}}$. Let us prove that
$\Psi(E,\alpha)=\Psi_{[G]}\circ \overline{\ST_{\sigma,L,+}}(E,\alpha)$ for all $(E,\alpha)\in \SF$. As $\SF'$ is a fine moduli space, it admits a universal framed bundle $(\SE',\alpha')$ over $(X',x')$ parametrized by $\SF'$. Taking the pullback by the map $\Psi\,:\,\SF\,\longrightarrow\, \SF'$, we obtain a family $(\SE_\Psi,\,\alpha_\Psi)
\,=\,\Psi^*(\SE',\,\alpha')$ of $\tau$-stable framed bundles over $(X',x')$ parametrized by $\SF$. On the other hand, let $(\SE,\alpha)$ be the universal framed bundle over $\SF$. Then
$$\left(\sigma^*(\SE\otimes p_X^*L),G\circ \sigma^*(\alpha\cdot \alpha_L)\right)$$
is a family of framed bundles over $(X',x')$ parametrized by $\SF$. Moreover, by construction, we know that the fiber over each point in $\SF^{\op{ss-vb}}$ is $\tau'$-stable and, therefore, it induces the map $\Psi_{[G]}\circ \overline{\ST}_{\sigma,L,+}\,:\,\SF^{\op{ss-vb}}\,\longrightarrow\, \SF'$. We know that $\Psi$ coincides with $\Psi_{[G]}\circ \overline{\ST_{\sigma,L,+}}$ over $\SF^{\op{ss-vb}}$. By Lemma \ref{lemma:codimUnstable}, the complement of $\SF^{\op{ss-vb}}$
in $\SF$ is of codimension at least $2$ and we know that $\SF$ is smooth. Therefore, using Lemma
\ref{lemma:uniqueExtension}, we conclude that
$$\Psi^*(\SE',\alpha')\cong \left(\sigma^*(\SE\otimes p_X^*L),G\circ \sigma^*(\alpha\cdot \alpha_L)\right)\, .$$
As a consequence, for each $(E,\alpha)\in \SF$,
$$\Psi(E,\,\alpha)\,=\, \left(\sigma^*(E\otimes L),\,G\circ \sigma^*(\alpha\cdot \alpha_L) \right)\, .$$
Finally, observe that by hypothesis the left hand side of the equality is $\tau'$-stable for every $(E,\alpha)\in \SF$, whereas the right hand side is $\tau$-stable by construction. If $\Psi$ is an isomorphism, then both sides of the equality run respectively over the entire space of $\tau'$-stable and $\tau$-stable framed bundles over $(X',x')$ with rank $r$ and determinant $\xi$. Therefore, the equality implies that a framed bundle is $\tau$-stable if and only if it is $\tau'$-stable.
\end{proof}

Let $J(X)[r]$ denote the $r$-torsion points in the Jacobian of $X$, and let $\Aut(X,x)$ be the 
group of automorphisms of $X$ that fix the point $x\in X$, i.e., $$\Aut(X,x)\,=\,\{\sigma\in \Aut(X) 
\,\mid\, \sigma(x)\,=\,x\}\,.$$

\begin{corollary}
\label{cor:maincor}
Suppose that $g\ge \max\{2+\tau,4\}$ and that $\tau$ is generic. Then the automorphism group of $\SF$ is
$$\Aut(\SF)\cong \PGL_r(\CC)\times \ST $$
for a group $\ST$ fitting in the short exact sequence
$$1\longrightarrow J(X)[r] \longrightarrow \ST \longrightarrow \Aut(X,x) \longrightarrow 1\, .$$
\end{corollary}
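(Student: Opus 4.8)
The plan is to read off the structure of $\Aut(\SF)$ directly from Theorem \ref{thm:mainthm}, which tells us that every automorphism of $\SF$ is of the form $(E,\alpha)\mapsto(\sigma^*E\otimes L,\, G\circ\alpha\cdot\alpha_L)$ for a triple $(\sigma,L,[G])$ with $\sigma\in\Aut(X,x)$, $L\in J(X)$, $\sigma^*\xi\otimes L^{\otimes r}\cong\xi$, and $[G]\in\PGL_r(\CC)$. First I would fix notation: write $\ST$ for the set of automorphisms of $\SF$ arising from pairs $(\sigma,L)$ with trivial $[G]$, i.e.\ the image of the assignment $(\sigma,L)\mapsto\overline{\ST_{\sigma,L,+}}$. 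One checks from the composition law $\overline{\ST_{\sigma_1,L_1,+}}\circ\overline{\ST_{\sigma_2,L_2,+}}=\overline{\ST_{\sigma_1\circ\sigma_2,\,L_1\otimes\sigma_1^*L_2,\,+}}$ that $\ST$ is a subgroup, and that the $\PGL_r(\CC)$-part commutes with it: since $\overline{\ST_{\sigma,L,+}}$ only modifies the underlying bundle and the framing by a scalar, while $\varphi_{[G]}$ only post-composes the framing by $G$, the two operations commute on the nose. Hence multiplication gives a surjective homomorphism $\PGL_r(\CC)\times\ST\to\Aut(\SF)$; injectivity follows because if $\varphi_{[G]}\circ\overline{\ST_{\sigma,L,+}}=\id$ then, looking at the induced map on $\SM$ (which is $\ST_{\sigma,L,+}$ and must be the identity, forcing $\sigma=\id$ and $L$ trivial by the faithfulness of the $\ST_{\sigma,L,s}$-parametrization over $\SM$, cf.\ \cite{KP}), we get $\varphi_{[G]}=\id$, hence $[G]=1$ by faithfulness of the $\PGL_r(\CC)$-action (\cite[Proposition 2.5]{BGM10}). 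This yields the direct product decomposition $\Aut(\SF)\cong\PGL_r(\CC)\times\ST$.

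Next I would analyze $\ST$ itself. There is an obvious forgetful homomorphism $p:\ST\to\Aut(X,x)$ sending $\overline{\ST_{\sigma,L,+}}$ to $\sigma$; the composition law above shows $p$ is a homomorphism. The kernel consists of those $\overline{\ST_{\mathrm{id},L,+}}$ with $L^{\otimes r}\cong\xi\otimes\xi^{-1}=\SO_X$, i.e.\ $L\in J(X)[r]$, and distinct such $L$ give distinct automorphisms (again by faithfulness of tensorization on $\SM$, or directly on $\SF^0$). So we get the exact sequence $1\to J(X)[r]\to\ST\xrightarrow{p}\Aut(X,x)$. It remains to check that $p$ is surjective, i.e.\ that for every $\sigma\in\Aut(X,x)$ there exists $L\in J(X)$ with $\sigma^*\xi\otimes L^{\otimes r}\cong\xi$. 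This holds because $\sigma^*\xi\otimes\xi^{-1}$ is a degree-zero line bundle, hence lies in $J(X)$, and the multiplication-by-$r$ map $J(X)\to J(X)$ is surjective (it is an isogeny of the abelian variety $J(X)$); any $r$-th root $L$ of $\xi\otimes\sigma^*\xi^{-1}$ works. This completes the short exact sequence and hence the corollary.

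The routine part is the bookkeeping of composition laws; the only mildly delicate point is the injectivity of $\PGL_r(\CC)\times\ST\to\Aut(\SF)$, which rests on two faithfulness statements that are already available: faithfulness of the $\PGL_r(\CC)$-action on $\SF$ from \cite[Proposition 2.5]{BGM10}, and faithfulness of the parametrization $(\sigma,L)\mapsto\ST_{\sigma,L,+}$ of automorphisms of $\SM$ from \cite{KP} (equivalently, that $\sigma^*E\otimes L\cong E$ for all stable $E$ forces $\sigma=\id$ and $L=\SO_X$, which in turn follows from a Torelli-type argument — this is exactly the content already invoked in the proofs above). I do not expect any genuine obstacle here; the corollary is essentially an immediate repackaging of Theorem \ref{thm:mainthm}.
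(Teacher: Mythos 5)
Your proposal is correct and follows essentially the same route as the paper: split off the $\PGL_r(\CC)$-factor using commutation and faithfulness, then exhibit $\ST$ as an extension of $\Aut(X,x)$ by $J(X)[r]$ using surjectivity of multiplication by $r$ on $J(X)$ and the fact that two choices of $r$-th root differ by $r$-torsion (the paper phrases the kernel step as normality of $J(X)[r]$ via the conjugation relation $\overline{\ST_{\sigma,L,+}}\circ\overline{\ST_{\id,L',+}}=\overline{\ST_{\id,\sigma^*L',+}}\circ\overline{\ST_{\sigma,L,+}}$, which is equivalent to your kernel computation). The only quibble is that since $\sigma_1^*\sigma_2^*=(\sigma_2\circ\sigma_1)^*$, your composition law should read $\overline{\ST_{\sigma_2\circ\sigma_1,\,L_1\otimes\sigma_1^*L_2,\,+}}$, so $\overline{\ST_{\sigma,L,+}}\mapsto\sigma$ is an anti-homomorphism; this is harmless for the stated extension (and the paper glosses over the same point).
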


\begin{proof}
We proved that the automorphism group is generated by the maps
\begin{itemize}
\item $\Psi_{[G]}$ for each $[G]\in \PGL_r(\CC)$, and
\item $\overline{\ST_{\sigma,L,+}}$ for each $\sigma\,\in\, \Aut(X,x)$ and each $L\in J(X)$ such that $\sigma^*(\xi \otimes L^{\otimes r})\cong \xi$.
\end{itemize}
First of all, the action of $\PGL_r(\CC)$ is faithful and commutes with all of the maps $\overline{\ST_{\sigma,L,+}}$, so we can split the group $\Aut(\SF)$ as a product
$$\Aut(\SF)\cong \PGL_r(\CC) \times \left \langle \left \{ \overline{\ST_{\sigma,L,+}} \, \middle |\, \sigma^*(\xi\otimes L^{\otimes r})\cong \xi \right \} \right \rangle\, .$$
Observe that, by construction, $\overline{\ST_{\sigma,L,+}}$ lies over the automorphism
$$\ST_{\sigma,L,+}\,:\,\SM\,\longrightarrow\, \SM$$ through the forgetful map $f\,:\,\SF
\,\longrightarrow\, \SM$. Since the latter is not trivial for any $\sigma\,\in\, \Aut(X,x)$ and
$L\,\in\, \Pic(X)$, apart from $(\sigma,\,L)\,=\,(\id,\,\SO_X)$,
it follows that $\ST_{\sigma,L,+}\,\ne\, \id$ for $(\sigma,\,L)\,\ne\, (\id,\,\SO_X)$.

Therefore, it suffices to prove that the group
$$\ST\,=\,\left \langle \left \{ \overline{\ST_{\sigma,L,+}} \, \middle |\, \sigma^*(\xi\otimes L^{\otimes r})\cong \xi \right \} \right \rangle$$ consisting of
the maps $\overline{\ST_{\sigma,L,+}}$ which preserve the determinant is an extension of $\Aut(X,x)$ by $J(X)[r]$.

Let $\sigma\in \Aut(X,x)$ be any automorphism. Since $\text{degree}(\xi)\,=\,\text{degree}((\sigma^{-1})^*\xi)$,
there is a line bundle $L_\sigma \in J(X)$ such that
$$\sigma^*(\xi \otimes L_\sigma^{\otimes r}) \,\cong\, \xi\, .$$
Moreover, if $L_\sigma'\in J(X)$ is another line bundle with the same property, then
$(L_\sigma')^{\otimes r} \,\cong\, L_\sigma^{\otimes r}$, so $L_\sigma$ and $L_\sigma'$ differ
by tensoring with an $r$-torsion element of the Jacobian $J(X)$.

Thus, if we pick a choice for $L_\sigma$ for each $\sigma\in \Aut(X,X)$, then $\ST$ is generated as a group by the maps
\begin{itemize}
\item $\overline{\ST_{\sigma,L_\sigma,+}}$ for $\sigma\in \Aut(X,x)$
\item $\overline{\ST_{\id,L,+}}$ for $L\in J(X)[r]$.
\end{itemize}

Moreover, for every $\sigma \in \Aut(X,x)$, every $L\in \Pic(X)$ and every $L'\in J(X)[r]$, we have
$$\overline{\ST_{\sigma,L,+}}\circ \overline{\ST_{\id,L',+}} =
\overline{\ST_{\id,\sigma^*L',+}}\circ \overline{\ST_{\sigma,L,+}}\,\, .$$
Since $\sigma^*:J(X)[r] \longrightarrow J(X)[r]$ is an automorphism, it follows that
$$\overline{\ST_{\sigma,L,+}} \circ J(X)[r] = J(X)[r]\circ \overline{\ST_{\sigma,L,+}}\, \,.$$
Therefore, $J(X)[r]$ is a normal subgroup of $\ST$ and its
quotient is precisely $\Aut(X,x)$, so we obtain an exact sequence
\begin{eqnarray*}
\xymatrixcolsep{4pc}
\xymatrix{
1 \ar[r] & J(X)[r] \ar[r]^-{L \mapsto \overline{\ST_{\id,L,+}}} & \ST \ar[r]^-{\overline{\ST_{\sigma,L,+}} \mapsto \sigma} & \Aut(X,x) \ar[r] & 1
}
\end{eqnarray*}
This completes the proof.
\end{proof}

\section*{Acknowledgements} 

We are very grateful to the referee for helpful comments.
This work was developed during a research stay of both authors at the Laboratoire J. 
A. Dieudonn{\'e} at Universit{\'e} de Nice Sophia-Antipolis. We would like to thank 
the laboratory for its hospitality. This research was partially funded by MINECO 
(grant MTM2016-79400-P and ICMAT Severo Ochoa project SEV-2015-0554) and the 7th 
European Union Framework Programme (Marie Curie IRSES grant 612534 project MODULI). 
The first author was also supported by a predoctoral grant from Fundaci\'on La Caixa 
-- Severo Ochoa International Ph.D. Program and a postdoctoral position associated to the Severo Ochoa project. Moreover, he would like to thank Tom\'as G\'omez for the useful discussions held during the development of this work. The second
author is supported by a J. C. Bose Fellowship.

\end{document}